\def\R{{\mathbb{R}}}
\def\E{{\mathbb{E}}}
\DeclareMathOperator*{\argmin}{arg\,min}
\newcommand{\eps}{\varepsilon}
\def\u{{\mathbf{u}}}
\def\v{{\mathbf{v}}}
\def\z{{\mathbf{z}}}
\def\y{{\mathbf{y}}}
\def\X{{\mathbf{X}}}
\def\S{{\mathbf{S}}}
\def\Z{{\mathbf{Z}}}
\def\D{{\mathbf{D}}}
\def\U{{\mathbf{U}}}
\def\V{{\mathbf{V}}}
\def\K{{\mathbf{K}}}
\def\W{{\mathbf{W}}}
\def\A{{\mathbf{A}}}
\def\B{{\mathbf{B}}}
\def\Y{{\mathbf{Y}}}
\def\I{{\mathbf{I}}}
\def\Q{{\mathbf{Q}}}
\def\M{{\mathbf{M}}}
\newcommand{\revised}[1]{\textcolor{black}{#1}}
\newcommand{\revtwo}[1]{\textcolor{black}{#1}}
\renewcommand{\epsilon}{\varepsilon}
\newtheorem{theorem}{Theorem}
\newtheorem{definition}{Definition}
\newtheorem{lemma}{Lemma}
\newtheorem{remark}{Remark}
\newtheorem{corollary}{Corollary}
\begin{document}

\title{Learning nonnegative matrix factorizations from compressed data} 

\author{Abraar Chaudhry, Elizaveta Rebrova} 
\maketitle
\begin{abstract}
 We propose a flexible and theoretically supported framework for scalable nonnegative matrix factorization.
The goal is to find nonnegative low-rank components directly from compressed measurements, accessing the original data only once or twice.
We consider compression through randomized sketching methods that can be adapted to the data, or can be oblivious.
We formulate optimization problems that only depend on the compressed data, but which can recover a nonnegative factorization which closely approximates the original matrix.
The defined problems can be approached with a variety of algorithms, and in particular, we discuss variations of the popular multiplicative updates method for these compressed problems.
We demonstrate the success of our approaches empirically and validate their performance in real-world applications.
\end{abstract}

\section{Introduction}
Low-rank approximations are arguably the main tool for simplifying and interpreting large, complex datasets. Methods based on singular value decomposition of the data matrix deliver deterministic, useful results via polynomial-time algorithms. However, for nonnegative data, spatial localization and interpretability of the features can be boosted by additionally making the factors element-wise nonnegative \cite{lee1999learning}. In the standard form, given an \revtwo{element-wise} nonnegative matrix $\X \in \R_{\revtwo{+}}^{m \times n}$, and a target rank $r$, the Nonnegative Matrix Factoriration (NMF) problem is the task of finding matrices $\U \in \R^{m \times r}_{\revtwo{+}}$ and $\V \in \R^{n \times r}_{\revtwo{+}}$ solving the problem

\vspace*{-.5cm}

\begin{equation}\label{eq:nmf}
    \min_{\U,\V\geq 0} \|\X-\U\V^T\|_F^2.
\end{equation}

\vspace*{-.1cm}
From the component matrices $\U$ and $\V$, one can obtain soft clustering of the data with additional interpretability of the components, compared to the SVD results \cite{lee1999learning}.
NMF became a standard analysis tool across many application areas, such as topic modeling and text mining \cite{chen2019experimental,vendrow2021guided}, image processing \cite{lee1999learning}, hyperspectral unmixing \cite{huang2019spectral,feng2022hyperspectral}, genomics \cite{shao2017robust}, and others. It is amenable to numerous extensions, such as, incorporating semi-supervision \cite{lee2009semi}, tree-like hierarchy of the clusters \cite{grotheer-etal-2020-covid,kuang2013fast}, or additional information about the domain \cite{vendrow2021guided, gillis2008nonnegative}.

The problem \eqref{eq:nmf} is in general NP-hard \cite{nmf_np_hard} to solve and \revised{does not possess a unique solution. However, under additional conditions, the uniqueness up to scaling and permutation holds; see, e.g., \cite[Chapter~4]{gillisbook}.
Further,} several iterative algorithms have been developed to solve the NMF problem, including multiplicative updates (MU), alternating least squares (ALS), alternating nonnegative least squares (ANLS), and hierarchical alternating least squares (HALS).
See \cite[Chapter~8]{gillisbook} for a discussion of these methods. 

When the size of the data matrix $\X$ is large, it is challenging or impossible to store it, and even harder to use it within the optimization algorithms searching for the NMF decomposition.
Note that the resulting factors $\U$ and $\V$ collectively contain only $r(n+m)$ entries which is much less than the total of $nm$ entries in $\X$ if $r \ll \min\{n,m\}$.
\revtwo{Thus, it is reasonable to ask if one can store and process the resulting factors, even when }processing the whole data is challenging. 

This motivates us to apply a \emph{sketch-and-solve} approach to the NMF problem. 
This means that we will first compute a linear function of our input matrix $\mathcal{L}(\X)$, known as a \emph{sketch}, and then second, find a good factorization $\U\V^T$ based only on the sketch and the linear function $\mathcal{L}$, known as the \emph{sketching map}.
If the size of the sketch is much smaller than $\X$, the second task may be significantly more practical when there are memory limitations.  The practice of using sketches is also applicable to matrix factorization problems in other settings such as when different entries of $\X$ are revealed over time. In certain streaming applications, it has been shown that there is little benefit to considering nonlinear sketching functions as opposed to \emph{linear} sketches \cite{li2014turnstile}.

A standard and well-developed application of the linear sketch-and-solve approach is for the simpler problem of linear regression \cite{sarlos2006improved, drineas2011faster, raskutti2016statistical}. Wide classes of random matrices, defined obliviously to the underlying data, can be used as linear sketching operators, and deterministic conditions related to the geometry preservation properties of the sketching operators were formulated \cite{mahoney2011randomized,woodruff2014sketching}. Another prominent example of the sketch-and-solve approach is randomized SVD algorithms. To find low-rank factorization of a given matrix from its sketched measurements, the sketch should retain spectral properties of the matrix rather than being data oblivious. In \cite{halko2011finding}, a simple procedure of forming refined data-adapted sketches via just one pass over the data -- a randomized rangefinder algorithm --  was proposed. 

\revtwo{Despite recent progress on many important problems using sketching, there has been little systematic work on adapting the sketching ideas
to NMF.} Here, we develop \revtwo{a} theory showing why and how the data-adapted sketches \revtwo{as in \cite{halko2011finding}} are useful in finding \emph{nonnegative} low-rank components; and we also consider the cases when random, independent from the data, sketches can be used. 

 One way to sketch a structured object -- in our case, a matrix -- is to vectorize it and use random linear map on the resulting vector. This includes standard compressive sensing approaches for low-rank matrix recovery such as in \cite{davenport2016overview, fawzi2015lower}. Another way (which is the focus of this work) is to consider sketches that take of the form of left and right matrix products with $\X$, e.g., \revised{$\A_1\X$ or $\X \A_2$ for appropriately sized matrices $\A_1$ and $\A_2$}. Column and row sketching have been used successfully for matrix factorization and approximation problems \cite{tropp17, fazel2008compressed, WOOLFE2008335}, and its higher order analogue modewise sketching was used to speed up tensor low-rank fitting \cite{iwen2021lower,haselby2023modewise}. An advantage of this approach is in compact sketching matrices $\A_1 \in \R^{k \times m}$ and $\A_2 \in \R^{n \times k}$ that contain $k(n + m)$ elements together, compared to the $kmn$ entries in a linear sketch that is applied to a vectorization of the matrix $\X$ in $\R^{mn}$. Another advantage is in preserving the matrix structure of the data throughout the sketching, which is crucial in this work for integrating the compressed matrices within learning algorithms, such as the multiplicative updates algorithm.

\subsection{Contributions and outline}
The idea to make NMF problem scalable through randomized sketching was considered earlier. \emph{In Section~\ref{sec:lit}}, we review relevant related work. What was missing in the prior works is the crucial link between the algorithmic outcomes of compressed problems and their fitness for the original problem. Establishing such connection is challenging partially due to the limited theoretical guarantees for the convergence of NMF algorithms (which is essentially inevitable \revised{due} to the NP-hardness of the problem). We approach this challenge in the following way: \emph{(1) we define the compressed problems such that we can provably compare their optimal solutions with the optimal solutions to the uncompressed problem, and (2) we propose efficient algorithms for these compressed problems.} Due to (1), this also means getting good solutions for the original problem.

\revtwo{We note that popular and practically efficient algorithms to solve the NMF problem, such as multiplicative updates algorithm (MU) studied here, do not have strong convergence guarantees, neither on a standard NMF problem nor -- similarly -- on its compressed counterpart. Actually, both the algorithmic guarantees for the sketched and for the full NMF problem are developed similarly. Consecutively, the rigorous link between the compressed and full NMF is obtained in terms of the optimal values, not algorithmic outcomes. }
\vspace*{0.2cm}

\emph{In Section~\ref{sec:compressed-problems}}, we formulate optimization problems which depend only on sketches, but whose optimal solutions approximately solve the original NMF problem. In addition, these problems are formulated to be amenable for efficient solvers. 
We propose three types of such problems: (I) for two-sided row and column sketching, (II) for one-sided sketching with orthogonal data-adapted measurement matrices and (III) with approximately orthogonal (e.g., random) data-oblivious measurement matrices.

 The proposed compressed problem with row and column sketching is as follows:
    \begin{align*}
 \text{(I)} \quad \tilde{\U}, \tilde{\V} = \argmin_{\U, \V \ge 0} &\|\A_1 (\X- \U\V^T)\|_F^2 + \|(\X-\U\V^T)\A_2\|_F^2 \\ &+ \lambda_1\|P_{\X \A_2 }^\perp \U\V^T\|_F^2 + \lambda_2\|\U\V^T P_{\X^T \A_1^T}^\perp\|_F^2 \quad \text{(in Theorem~\ref{thm:exact-two-sided})}
\end{align*}
\revtwo{for some $\lambda_1, \lambda_2 > 0$.}
Theorem~\ref{thm:exact-two-sided} guarantees that if $\X$ has an exact nonnegative factorization of rank $r$, then the solution to the problem above is also exact $\X = \tilde{\U} \tilde{\V}$ as long as the sketching dimension is at least $r$. Crucially, the matrices $\A_1$ and $\A_2$ can be generic.  We explain how to solve this problem in the sketched space despite the inclusion of the regularizer terms involving orthogonal projections. Empirically, as shown in Section~\ref{sec: experiments}, this problem can be employed in a simplified form with $\lambda_1 = \lambda_2 = 0$, and it is suitable for the data with approximately low nonnegative rank: if the sketching matrices are generic (in particular, not designed to approximate the range of the data), the two-sided method should be employed for tight recovery. 
 
 The one-sided sketching can be more convenient for some types of the data and also is more compact. Indeed, iteratively solving the two-sided  problem requires storing and using both $\A_1$ and $\A_2$, whereas using sketches on one side takes twice less space for the same amount of compression. The proposed one-sided compressed problems formulations are
\begin{equation*}
\text{(II)} \quad  \tilde{\U}, \tilde{\V} = \argmin_{\U, \V \ge 0}\left[ \|\A(\X-\U\V^T)\|^2_F +  \lambda \|P_{\A^T}^\perp \U\V^T\|^2_F\right] \quad \text{(in Theorem~\ref{thm:oneside_penalty}), or } 
\end{equation*}
\begin{equation*}
  \text{(III)} \quad  \tilde{\U}, \tilde{\V} = \argmin_{\U, \V \ge 0}\left[ \|\A(\X-\U\V^T)\|^2_F +  \lambda\|\U\V^T\|^2_F\right] \quad \text{(in Theorem~\ref{thm: apx orthogonal A}). }
\end{equation*} 

So, what is required from the sketching matrices to work successfully within one-sided compression, (II) or (III)? Theorem~\ref{thm:oneside_penalty} is stated for the sketching matrices with orthonormal rows: in this case, the regularizer in the form $P^\perp_{\A^T} \U \V^T = (\I - \revised{\A^T\A})\U \V^T$ can be conveniently incorporated in the efficient solvers. Otherwise we can use the simplified regularizer without projection operator, Theorem~\ref{thm: apx orthogonal A}, where the resulting loss depends on $\A$ being approximately orthogonal (to the extent easily satisfied by generic random matrices, as described in Remark~\ref{rem:apx-orth-discussion}). We note that in the one-sided cases, nonzero regularization $\lambda$ is crucial both theoretically \revised{and} empirically (Figure~\ref{fig:lambda}).

Informally, both Theorems~\ref{thm:oneside_penalty} and \ref{thm: apx orthogonal A} state that when we find $\tilde{\U}$ and $\tilde{\V}^T$ solving the compressed problems stated above, the error $\| \X - \tilde{\U}\tilde{\V}^T\|^2_F$ depends on (a) how well an existent (unknown) solution of rank $r$ fits the uncompressed problem $\|\X - \U \V^T\|_F^2$, (b) how well the sketching matrix approximates the range of the data $\|P^\perp_{\A^T} \X\|^2_F$, and (c) how close to orthonormal are the rows of $\A$. In particular, this explains and quantfies how \emph{orthogonal data-adapted measurements} (e.g., constructed via the randomized rangefinder algorithm \cite{halko2011finding}) are useful in finding nonnegative low-rank components. By Corollary~\ref{cor:exact-one-sided}, in this case, the solution of the compressed problem above is exact for the original $\X$ that admits exact rank $r$ decomposition with the sketch size $k$ slightly oversamples $r$. Compared to that, \emph{data-oblivious one-sided measurements} incur additional loss, both theoretically and empirically, however they can be advantageous for other reasons. For example, they can be re-generated when needed without any access to data and they do not require an additional pass over the data to form them.  

\vspace*{0.2cm}

\emph{In Section~\ref{sec:solving}}, we move from comparing the optimal values to solving the three compressed problems above. We formulate new variants of the multiplicative updates (MU) algorithm for each of them and show that the losses are nonincreasing during their iterations in Corollaries~\ref{cor:solving-two-sided}, \ref{cor:solving-one-sided}, and~\ref{cor:solving apx orthogonal} respectively. These corollaries follow from a more general result Theorem~\ref{thm: MU} formulated for a generic loss function with sketching. We also briefly discuss using projected gradient descent method on our compressed objectives. 

One special technical challenge for using MU on the compressed objectives is that random sketching violates nonnegativity of the compressed components, which is the property that ensures the validity of the step sizes used in the MU algorithm. To address this challenge, we further generalize the defined compressed objective functions to include small shifts of the form 
$\sigma \|\mathbf{1}_m^T (\X-\U\V^T)\|^2$, 
where $\mathbf{1}_m$ is a vector of all ones in $\R^m$. This results in corrections of the form $\A^T\A + \sigma \mathbf{1}_m\mathbf{1}_m^T$  that restore required nonnegativity inside the recovery algorithms (relevant theory is developed in \emph{Subsection~\ref{sec:nonnegativity}}).  

\vspace*{0.2cm}

\emph{In Section~\ref{sec: experiments}} we give some examples on real and synthetic datasets, in particular, showing successful performance of the proposed methods using about $5\%$ of the initial data.
Finally, \emph{in Section~\ref{sec:future}}, we conclude with some directions for future research.

\section{Related work on scalable NMF}\label{sec:lit}
A number of randomized methods were proposed to improve the scalability of NMF, most of them in the form of heuristics. 

First, several works propose iterative random sketching approach, which requires sampling new random sketching matrices as the algorithm progresses.
Such works include the methods involving random projection streams \cite{yahaya2020gaussian,yahaya2021random} that allow for Johnson-Lindenstrauss-type sketching matrices (random and oblivious to the data) but require multiple sketches and passes over the initial data.
Similarly, in \cite{qian2018dsanls,qian2020fast} 
\revtwo{ memory-efficient versions of } NMF \revtwo{apply iteratively updated sketching } matrices \revtwo{ to the original data,} as opposed to a \revtwo{single} compression. 
\emph{Our focus is on the setting that requires one or two passes over the original data in the preprocessing stage while the algorithm that searches for nonnegative factors works solely on the compressed data.}

In \cite{sharan2019compressed}, the factor matrices are additionally assumed to be sparse and they were recovered with compressed sensing techniques. \emph{We do not make additional assumptions on the structure of the factor matrices.}

Data-adapted sketching with randomized SVD techniques was used in \cite{erichson2018randomized} in the context of the hierarchical alternating least squares algorithm, although no theoretical justification of the proposed methods was given. Recently, these ideas were extended to a symmetric variant of the NMF problem in \cite{hayashi2024randomized}. Additionally, in \cite{zhou2012fast}, a randomized SVD approximation is integrated into alternating multiplicative updates in a way that saves space, also without theoretical guarantees. 

The two works most closely related to ours are \cite{wang2010efficient} and \cite{tepper2016compressed}.
Both of these papers derive compressed objective functions and seek to apply semi-NMF methods to iteratively improve a nonnegative factorization. A semi-NMF is a factorization in which one factor is entrywise nonnegative and the other factor is not constrained. 
Both papers apply the semi-NMF multiplicative updates from \cite{ding2008convex} and the latter also considers other update methods including updates based on the alternating direction method of multipliers (ADMM).
Although the updates of \cite{ding2008convex} do possess guarantees to not increase their corresponding semi-NMF objectives, neither \cite{wang2010efficient} nor \cite{tepper2016compressed} show whether these guarantees can be extended to their NMF objectives. So, the validity of the derived objective functions or the convergence of proposed iterative methods on the original NMF problem was not theoretically justified. \emph{A crucial motivation of this work is to create a connection between the algorithms working on the compressed problems and their performance with respect to the solution to the original problem.} We achieve this with new variants of the standard NMF algorithms (different from those in \cite{tepper2016compressed, wang2010efficient}) for the newly formulated compressed objective functions. 
We also provide some numerical comparison between the methods in Section~\ref{sec: experiments}. 

\subsection{Notation} Throughout the text, matrices and vectors are denoted with bold letters. We denote Frobenius matrix norm as $\|.\|_F$ and the spectral (operator) norm as $\|.\|$. The matrix $\X \in \R^{m \times n}_+$ means it is element-wise nonnegative, the same is denoted by $\X \ge 0$ when the size of the matrix is clear from the context. Element-wise positive and negative parts of vectors and matrices are denoted as $(\cdot)_+ = \max(\cdot,0)$ and $(\cdot)_- = -\min(\cdot,0)$ respectively. Further, $\circ$ denotes element-wise multiplication and $/$ denotes element-wise division. $P_\Z$ is the linear projection operator onto the column space of a tall matrix $\Z$, projection to the orthogonal complement is $P_\Z^\perp := \I - P_\Z$.

\section{Compressed problems with reliable solutions}\label{sec:compressed-problems}
We formulate optimization problems analogous to \eqref{eq:nmf}, which do not require storing the entire data matrix $\X$ and instead use sketched measurements. This is achieved by the use of carefully chosen regularization terms. In this section, we prove that the formulated problems are guaranteed to approximately solve the original NMF problem. In the next section, we show that the standard NMF solvers are easily extendable to these \revtwo{new sketched and regularized} objective functions.

\subsection{Two-sided compression}
First, we show that if a matrix has an exact low-rank nonnegative matrix factorization, then one can recover an exact factorization using linear  measurements on both sides.

\begin{theorem}\label{thm:exact-two-sided}
Suppose $\X$ has an exact nonnegative factorization $\X = \U_0\V_0^T$, where $\U_0 \in \R_+^{m \times r}$, $\V_0 \in \R_+^{n \times r}$ \revised{with} $r \leq \min\{n,m\}$.
Let $\A_1$ and $\A_2$ be matrices of sizes $\revised{k_1} \times m$ and $n \times \revised{k_2}$ respectively such that $\A_1\X\A_2$ \revised{has the same rank as $\X$}\footnote{Note that this condition holds \emph{generically}, i.e., for all but a (Lebesgue) measure-zero set of matrices \revised{as long as $k_1$ and $k_2$ are both at least the rank of $\X$}.}.
For any $\lambda_1, \lambda_2 > 0$, let
\begin{align}\label{eq:two-sided}
 \tilde{\U}, \tilde{\V} \revtwo{\; \in \;}  \argmin_{\U, \V \ge 0} \|\A_1 (\X- \U\V^T)\|_F^2 &+ \|(\X-\U\V^T)\A_2\|_F^2 \\ &+ \lambda_1\|P_{\X \A_2}^\perp \U\V^T\|_F^2 + \lambda_2\|\U\V^T P_{\X^T \A_1^T}^\perp\|_F^2, \nonumber
\end{align}
where $(\U, \V \ge 0)$ means $(\U \in \R_+^{m \times r}, \V \in \R_+^{n \times r})$. Then $\X = \tilde{\U}\tilde{\V}^T$.
\end{theorem}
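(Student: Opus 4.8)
The plan is to use that the objective in \eqref{eq:two-sided} is a sum of four squared Frobenius norms, hence nonnegative, and to show that its minimum value $0$ is attained precisely when the product equals $\X$. First I would record the structural consequence of the hypothesis that $\A_1\X\A_2$ is invertible. Writing $\A_1\X\A_2 = (\A_1\U_0)(\V_0^T\A_2)$ as a product of two $r\times r$ matrices, invertibility of the product forces both $\A_1\U_0$ and $\V_0^T\A_2$ to be invertible. Combined with the full-rank assumption on $\U_0,\V_0$, this yields the two range identities I will rely on: the column space of $\X\A_2=\U_0(\V_0^T\A_2)$ equals the column space of $\U_0$, which equals the column space of $\X$; and the row space of $\A_1\X=(\A_1\U_0)\V_0^T$ equals the row space of $\V_0^T$, which equals the row space of $\X$. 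Thus the two orthogonal projectors appearing in the regularizers are exactly the projectors onto the column space and the row space of $\X$, so I may write them as $P_\X$ and $P_{\X^T}$.

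Next I would verify achievability. At $(\U,\V)=(\U_0,\V_0)$ the two sketch-fidelity terms vanish trivially, while the two regularizer terms equal $\lambda_1\|P_\X^\perp \X\|_F^2$ and $\lambda_2\|\X P_{\X^T}^\perp\|_F^2$ by the range identities above, both of which are $0$ since $\X$ maps into its own column and row spaces. Hence the minimum value of the objective is $0$ and is attained, so any minimizer $(\tilde\U,\tilde\V)$ must drive all four nonnegative terms to $0$ simultaneously.

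The heart of the argument is to show that these four vanishing conditions pin down the product $\M:=\tilde\U\tilde\V^T$ uniquely. From $P_\X^\perp \M=0$ and $\M P_{\X^T}^\perp=0$ I obtain that the columns of $\M$ lie in the column space of $\U_0$ and the rows of $\M$ lie in the row space of $\V_0^T$, so $\M=\U_0\mathbf{C}\V_0^T$ for some $\mathbf{C}\in\R^{r\times r}$. Feeding this into the first fidelity condition $\A_1(\X-\M)=0$ gives $(\A_1\U_0)(\I-\mathbf{C})\V_0^T=0$; cancelling the invertible factor $\A_1\U_0$ on the left and the full-row-rank factor $\V_0^T$ on the right (via a right inverse) forces $\mathbf{C}=\I$, whence $\M=\U_0\V_0^T=\X$. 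The second fidelity condition $(\X-\M)\A_2=0$ yields the same conclusion symmetrically and serves as a consistency check.

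The main obstacle — and the place where the regularizers earn their keep — is the step $\M=\U_0\mathbf{C}\V_0^T$: without the two projection penalties, a minimizer of only the sketch-fidelity terms could place the residual $\X-\M$ in the kernel of $\A_1$ and the left kernel of $\A_2$ while remaining nonzero, so the product would not recover $\X$. The positivity $\lambda_1,\lambda_2>0$ is exactly what guarantees that an objective value of $0$ forces both projection terms to vanish and hence confines $\M$ to the correct rank-$r$ column and row spaces. Note that the nonnegativity constraints $\U,\V\ge 0$ play no role in the uniqueness argument beyond ensuring that $(\U_0,\V_0)$ is feasible. I would also double-check the implicit dimensional reading of the projectors $P_{\A_2^T\X^T}$ and $P_{\A_1\X}$, confirming that they act on $\M$ from the correct side as $m\times m$ and $n\times n$ operators respectively, consistently with the range identities above.
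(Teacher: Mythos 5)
Your proof is correct, and it follows the paper's overall skeleton: show that $(\U_0,\V_0)$ attains objective value zero, conclude that all four nonnegative summands must vanish at any minimizer, and then show that these vanishing conditions force $\tilde\U\tilde\V^T=\X$. Where you genuinely diverge is in how that last step is executed. You first identify the two regularizing projectors with the projectors onto the column and row spaces of $\X$ (using that $\A_1\U_0$ and $\V_0^T\A_2$ are invertible $r\times r$ factors of $\A_1\X\A_2$), confine the product to the form $\U_0\mathbf{C}\V_0^T$ with an $r\times r$ core $\mathbf{C}$, and pin down $\mathbf{C}=\I$ by cancelling the invertible factor $\A_1\U_0$ on the left and the full-row-rank factor $\V_0^T$ on the right. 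The paper instead parametrizes the confined product as $\X\A_2\M\A_1\X$ with an explicit $\M$ built from pseudo-inverses of the sketches, and uses $\A_1\X\A_2=(\A_1\X\A_2)\M(\A_1\X\A_2)$ to identify $\M=(\A_1\X\A_2)^{-1}$, thereby recovering the pseudoskeleton identity $\X=\X\A_2(\A_1\X\A_2)^{-1}\A_1\X$ (which it also uses for the achievability step). Your route is somewhat more elementary and makes the role of the two projection penalties very transparent; the paper's route has the feature that the uniqueness argument is phrased entirely in terms of the stored quantities $\X\A_2$, $\A_1\X$, $\A_1\X\A_2$ rather than the unknown factors $\U_0,\V_0$, which connects it to the row--column matrix sensing argument it cites. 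One further point in your favor: you correctly read $P_{\A_2^T\X^T}$ and $P_{\A_1\X}$ as the $m\times m$ and $n\times n$ projectors onto the column space of $\X\A_2$ and the row space of $\A_1\X$ respectively, which is how the paper actually uses them.
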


\begin{remark}[Implementation considerations]\label{rem:implementation} This and further objective functions are formulated to allow for memory-efficient computations. 
\revised{For example, in the objective function~\eqref{eq:two-sided} above, it is not necessary to store the large matrix $\X$; one can instead store $\A_1\X$ and $\X\A_2$.}
Likewise, one does not need to store or compute $P_{\X\A_2}^\perp$ which is an $m \times m$ matrix, since one can instead compute
\begin{align*}
\|P_{\X\A_2}^\perp \U\V^T\|_F^2 &= \textrm{Tr}(\V\U^TP_{\X \A_2}^\perp \U\V^T) \\ 
&= \revised{\textrm{Tr}(\V\U^T(\I-P_{\X \A_2}) \U\V^T)}\\
&= \textrm{Tr}(\V\U^T \U\V^T) - \textrm{Tr}(\V\U^T \Q_2 \Q_2^T \U\V^T),
\end{align*}
where $\Q_2$ is an $m \times k_2$ matrix with columns that form the orthonormal basis of the columns of $\X \A_2$, and so $P_{\X \A_2} = \Q_2 \Q_2^T$. 
One can do a similar trick to compute $P_{\X^T\A_1^T}^\perp$ in terms of an analogous  matrix $\Q_1$\footnote{The efficient ways to find orthogonal bases of the column/row span are well-known, e.g., see the discussion in \cite{halko2011finding} Section 4.1.}.
Thus, one can compute the objective function of \eqref{eq:two-sided} with total storage cost \revised{$3r'(n+m)$, where $r' = \max\{r, k_1, k_2\}$}, by storing the matrices $\U, \V, \A_1\X, \X \A_2, \Q_1 , \Q_2$. This and similar considerations are crucial in Section~\ref{sec:solving}, when we study computationally efficient iterative algorithms  that solve the optimization problems on compressed data defined here.
\end{remark}

The proof of Theorem~\ref{thm:exact-two-sided} is loosely inspired by the row-column matrix sensing argument from \cite{fazel2008compressed}. 
\begin{proof} First, we show that the matrices $\U_0, \V_0$ such that $\X = \U_0 \V_0^T$ are not only feasible for the problem \eqref{eq:two-sided} but also give zero objective value.
Indeed, since $\A_1 \X \A_2$ \revised{has the same rank as $\X$, it follows that the matrices $\A_1 \X$ and $\X \A_2$ also have the same rank as $\X$.
This implies that the row space of $\A_1 \X$ and the column space of $\X \A_2$ are the same as the row space and column space of $\X$, respectively.
Thus we have $P_{\X \A_2} \X = P_{\X}$ and $P_{\X^T\A_1^T} \X = P_{\X^T}$.
Then one can observe that 
\[
\| \U_0\V_0^T P_{\X^T \A_1^T}^\perp \|_F = \|\U_0\V_0^T P_{\X^T}^\perp\|_F = \| \X P_{\X^T}^\perp \|_F = \| \mathbf{0} \|_F=0,
\]
}
and similarly $P_{\X \A_2}^\perp \X = \mathbf{0}$ and matrices $\U_0, \V_0$ give zero objective value.

Then, since $\tilde{\U},\tilde{\V}$ are optimal for \eqref{eq:two-sided}, they must also result in objective value $0$ and all four summands in \eqref{eq:two-sided} vanish:
\begin{equation}\label{eq:sum-12-vanish}
\A_1 (\X- \tilde \U \tilde \V^T) = (\X-\tilde \U\tilde \V^T)\A_2 = \mathbf{0},
\end{equation}
and 
\begin{equation}\label{eq:sum-34-vanish}
    P_{\X \A_2}^\perp \tilde \U \tilde \V^T = \tilde \U\tilde \V^T P_{\X^T \A_1^T}^\perp = \mathbf{0}.
\end{equation}

\revised{Let $\revtwo{\bar{\U}} \Sigma \revtwo{\bar{\V}}^T$ be a compact singular value decomposition of $\X$ (i.e., such that $\Sigma$ is invertible).
Observe that $P_{\X} = \revtwo{\bar{\U}} \revtwo{\bar{\U}}^T$ and $P_{\X^T} = \revtwo{\bar{\V}} \revtwo{\bar{\V}}^T$ and that the matrices $\A_1 \revtwo{\bar{\U}}$ and $\revtwo{\bar{\V}}^T \A_2$ have the same rank as $\X$.
Then we can write
\begin{equation}\label{two-sided-1} 
\tilde{\U}\tilde{\V}^T = \tilde{\U}\tilde{\V}^T P_{\X^T} = P_{\X}\tilde{\U}\tilde{\V}^T P_{\X^T} = \revtwo{\bar{\U}} \M \revtwo{\bar{\V}}^T, 
\end{equation}
where the matrix 
$$
\M := \revtwo{\bar{\U}}^T \tilde{\U}\tilde{\V}^T \revtwo{\bar{\V}} .
$$
Now observe 
$$
\revtwo{\bar{\U}}^T \A_1^T \A_1 \X \A_2 \A_2^T \revtwo{\bar{\V}} \overset{\eqref{eq:sum-12-vanish}}{=} 
\revtwo{\bar{\U}}^T \A_1^T \A_1 \tilde{\U}\tilde{\V}^T \A_2 \A_2^T \revtwo{\bar{\V}}  \overset{\eqref{two-sided-1}}{=}  \revtwo{\bar{\U}}^T \A_1^T \A_1 \revtwo{\bar{\U}} \M \revtwo{\bar{\V}}^T \A_2 \A_2^T \revtwo{\bar{\V}}.
$$
The rank of $\revtwo{\bar{\U}}^T \A_1^T \A_1 \revtwo{\bar{\U}}$ is the same as the rank of $\A_1 \revtwo{\bar{\U}}$ which is the same as the rank of $\X$, which is the same as the dimensions of $\revtwo{\bar{\U}}^T \A_1^T \A_1 \revtwo{\bar{\U}}$ (since the SVD is compact); thus, the matrix $\revtwo{\bar{\U}}^T \A_1^T \A_1 \revtwo{\bar{\U}}$ is invertible and likewise, the matrix $\revtwo{\bar{\V}}^T \A_2 \A_2^T \revtwo{\bar{\V}}$ is invertible.
Thus, we have
\begin{align*}
\M &= (\revtwo{\bar{\U}}^T \A_1^T \A_1 \revtwo{\bar{\U}})^{-1} \revtwo{\bar{\U}}^T \A_1^T \A_1 \X \A_2 \A_2^T \revtwo{\bar{\V}} (\revtwo{\bar{\V}}^T \A_2 \A_2^T \revtwo{\bar{\V}})^{-1}\\
&= (\revtwo{\bar{\U}}^T \A_1^T \A_1 \revtwo{\bar{\U}})^{-1} \revtwo{\bar{\U}}^T \A_1^T \A_1 \revtwo{\bar{\U}} \Sigma \revtwo{\bar{\V}}^T \A_2 \A_2^T \revtwo{\bar{\V}} (\revtwo{\bar{\V}}^T \A_2 \A_2^T \revtwo{\bar{\V}})^{-1}\\
&= \Sigma,
\end{align*}
and therefore we conclude that $\tilde{\U}\tilde{\V}^T = \revtwo{\bar{\U}} \M \revtwo{\bar{\V}}^T = \revtwo{\bar{\U}} \Sigma \revtwo{\bar{\V}}^T = \X$.}

\end{proof}

\revised{
\begin{remark}
Note that the conclusion of \ref{thm:exact-two-sided} holds for any matrices with zero objective value for problem \eqref{eq:two-sided}, regardless of the constraint set --- so, in particular, nonnegativity of the factors $\U_0$ and $\V_0$ is not used directly in the proof beyond the fact that the objective value of the uncompressed problem is zero.
\end{remark}
}

\subsection{One-sided compression: orthogonal sketching matrices}
Note that the described method requires measurements on both sides (and taking either $\A_1$ or $\A_2$ to be the identity matrix results in a necessity to work with the full matrix $\X$).
Now, we will show that it can be enough to measure the matrix on one side only. 
\begin{theorem}\label{thm:oneside_penalty} (Orthogonal $\A$)
Let $\X \in \R_+^{m \times n}$ be any matrix and let $\A \in \R^{k \times m}$ be a matrix with orthogonal rows (i.e., $\A\A^T = \I$). Let $\U_0 \in \R_+^{m \times r}$, $\V_0 \in \R_+^{r \times n}$ give a solution to the original NMF problem \eqref{eq:nmf} of rank $r$ and $\X_0 = \U_0\V_0^T$. \revtwo{Suppose}  $\tilde{\U},\tilde{\V}$  solve a compressed NMF problem with the same rank $r$, that is,
\begin{equation}\label{eq:comp-nmf}
    \tilde{\U}, \tilde{\V} \revtwo{\; \in \;} \argmin_{\U, \V \ge 0}\left[ \|\A(\X-\U\V^T)\|^2_F +  \lambda \|P_{\A^T}^\perp \U\V^T\|^2_F\right],
\end{equation}
where $\lambda > 0$, $P_{\A^T}^\perp := \I - \A^T\A$, and $(\U, \V \ge 0)$ means $(\U \in \R_+^{m \times r}, \V \in \R_+^{r \times n})$. Then $\tilde{\X} := \tilde{\U} \tilde{\V}^T$ satisfies
\begin{equation}\label{eq:one-sided-loss}\frac{\|\X - \tilde{\X}\|^2_F}{\|\X\|^2_F} \le c_\lambda \left[ \frac{\| \X - \X_0\|^2_F}{\|\X\|^2_F} + \frac{\|P^\perp_{\A^T} \X\|^2_F}{\|\X\|^2_F}\right],
\end{equation}
where $c_\lambda = \max(2/\lambda, 6, 2\lambda + 2).$
\end{theorem}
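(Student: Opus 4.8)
The engine of the proof is the orthogonal (Pythagorean) decomposition afforded by the assumption $\A\A^T = \I$: since then $\A^T\A = \I - P_\A^\perp$ is the orthogonal projector onto the row space of $\A$, every matrix $\M \in \R^{m\times n}$ satisfies $\|\M\|_F^2 = \|\A\M\|_F^2 + \|P_\A^\perp\M\|_F^2$ and, in particular, $\|\A\M\|_F \le \|\M\|_F$. The plan is to apply this identity to $\M = \X - \tilde{\X}$, which splits the quantity we must control into a \emph{measured part} $\|\A(\X - \tilde{\X})\|_F^2$ --- exactly the first term of the compressed objective evaluated at $\tilde{\X}$ --- and an \emph{orthogonal part} $\|P_\A^\perp(\X - \tilde{\X})\|_F^2$.

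Write $g(\M) := \|\A(\X - \M)\|_F^2 + \lambda\|P_\A^\perp\M\|_F^2$ for the compressed objective, so that optimality of $\tilde{\U}, \tilde{\V}$ gives $g(\tilde{\X}) \le g(\X_0)$; since both summands of $g$ are nonnegative, each of $\|\A(\X - \tilde{\X})\|_F^2$ and $\lambda\|P_\A^\perp\tilde{\X}\|_F^2$ is at most $g(\X_0)$. The measured part of the error is therefore immediately bounded by $g(\X_0)$. For the orthogonal part I would use the Young inequality $\|P_\A^\perp(\X - \tilde{\X})\|_F^2 \le (1 + 1/\theta)\|P_\A^\perp\X\|_F^2 + (1 + \theta)\|P_\A^\perp\tilde{\X}\|_F^2$ for a parameter $\theta > 0$: the first term is exactly the quantity $\|P_\A^\perp\X\|_F^2$ (quantity (b)), while the second is $\le (1+\theta)g(\X_0)/\lambda$. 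It remains to bound $g(\X_0)$ in terms of the two target quantities: its first summand is $\|\A(\X - \X_0)\|_F^2 \le \|\X - \X_0\|_F^2$ (quantity (a)), and for the regularizer $\lambda\|P_\A^\perp\X_0\|_F^2$ I would substitute $\X_0 = \X - (\X - \X_0)$ and expand by Young's inequality again, producing contributions proportional to (a) and to (b) with a factor $\lambda$ out front.

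Collecting these estimates yields a bound of the form $\|\X - \tilde{\X}\|_F^2 \le C_1(\lambda)\,\|\X - \X_0\|_F^2 + C_2(\lambda)\,\|P_\A^\perp\X\|_F^2$; dividing through by $\|\X\|_F^2$ produces the claimed relative-error inequality, so the entire remaining content of the theorem is the size of the constants. This constant bookkeeping is where I expect the real work. The amplification factor $\sim 1/\lambda$ incurred when passing from the regularizer to $\|P_\A^\perp\tilde{\X}\|_F^2$ pulls the estimate in one direction, while the factor $\lambda$ multiplying the $\|P_\A^\perp\X_0\|_F^2$ term in $g(\X_0)$ pulls it the other way; balancing them forces the Young parameter $\theta$ (and the analogous parameter in the $g(\X_0)$ expansion) to be chosen as a function of $\lambda$. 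I anticipate a short case analysis on the magnitude of $\lambda$ --- small, moderate, and large --- with the three cases responsible respectively for the $2/\lambda$, the $6$, and the $2\lambda + 2$ appearing in $c_\lambda = \max(2/\lambda, 6, 2\lambda + 2)$. Verifying that both $C_1$ and $C_2$ stay below this envelope in every regime is the main obstacle; each individual case is only a routine, if delicate, inequality.
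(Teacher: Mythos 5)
Your proposal follows essentially the same route as the paper's proof: split $\|\X-\tilde{\X}\|_F^2$ by the Pythagorean identity into $\|\A(\X-\tilde{\X})\|_F^2 + \|P_\A^\perp(\X-\tilde{\X})\|_F^2$, transfer the compressed objective from $\tilde{\X}$ to $\X_0$ by optimality, and apply Young's/the triangle inequality to both $P_\A^\perp(\X-\tilde{\X})$ and $P_\A^\perp\X_0$ (the paper simply fixes your parameter $\theta=1$ throughout). The one place your bookkeeping would come out slightly worse is that you bound the two summands of the objective at $\tilde{\X}$ \emph{separately} by $g(\X_0)$, yielding a factor $1+(1+\theta)/\lambda$, whereas the paper bounds $\|\A(\X-\tilde{\X})\|_F^2 + 2\|P_\A^\perp\tilde{\X}\|_F^2$ \emph{jointly} by $\max(1,2/\lambda)$ times the objective, which is exactly what lets it land on $c_\lambda=\max(2/\lambda,6,2\lambda+2)$.
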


Before we start the proof, let us recall a simple corollary of the Pythagorean theorem and the triangle inequality to be used several times below: for any matrices $\X, \Y$ and a projection operator $P_{\A^T}$, 
\begin{align}\label{eq:triangle}
\|\X - \Y\|^2_F &\leq \|P_{\A^T}(\X - \Y)\|^2_F + 2\|P_{\A^T}^\perp \Y \|^2_F + 2\|P_{\A^T}^\perp \X \|^2_F.
\end{align}
Indeed, this follows from 
\begin{align*}
\|\X - \Y\|^2_F &=\|P_{\A^T}(\X - \Y)\|^2_F + \|P_{\A^T}^\perp(\X - \Y)\|^2_F \nonumber\\
&\leq \|P_{\A^T}(\X - \Y)\|^2_F + (\|P_{\A^T}^\perp \Y \|_F + \|P_{\A^T}^\perp \X \|_F)^2 \\
&\leq \|P_{\A^T}(\X - \Y)\|^2_F + 2\|P_{\A^T}^\perp \Y \|^2_F + 2\|P_{\A^T}^\perp \X \|^2_F.\nonumber
\end{align*}

\begin{proof}[Proof of Theorem~\ref{thm:oneside_penalty}]

First, note that
\begin{equation}\label{feas}
\|\A(\X-\tilde{\X})\|^2_F +  \lambda\|P_{\A^T}^\perp \tilde{\X}\|^2_F  \leq \|\A(\X-\X_0^T)\|^2_F +  \lambda\|P_{\A^T}^\perp \X_0\|^2_F,
\end{equation}
since $\tilde{\U},\tilde{\V}$ minimize the objective of \eqref{eq:comp-nmf} over all nonnegative matrices of the appropriate sizes. Now, since $\A\A^T = \I$, observe that  for any $\M \in \R^{m \times n}$ matrix $$\|\A\M\|_F = \|\A^T\A\M\|_F = \|P_{\A^T}\M\|_F.$$
So, using \revtwo{inequality} \eqref{eq:triangle} for the matrices $\X$ and $\tilde{\X}$, we can estimate
\begin{align*}
\|\X - \tilde{\X}\|^2_F 
 &\leq \|\A(\X - \tilde{\X})\|^2_F + 2\|P_{\A^T}^\perp \tilde{\X} \|^2_F + 2\|P_{\A^T}^\perp \X \|^2_F\\
&\leq c_1 \big( \|\A(\X - \tilde{\X})\|^2_F + \lambda \|P_{\A^T}^\perp \tilde{\X} \|^2_F \big) + 2\|P_{\A^T}^\perp \X \|^2_F\\
&\overset{\eqref{feas}}{\leq} c_1 \big( \|\A(\X - \X_0)\|^2_F + \lambda \|P_{\A^T}^\perp \X_0 \|^2_F \big) + 2\|P_{\A^T}^\perp \X \|^2_F
\end{align*}
for $c_1 = \max(2/\lambda, 1).$ Using \revtwo{inequality}  \eqref{eq:triangle} for the matrices $\X$ and $\X_0$, we can estimate the term in parentheses as
\begin{align*}
\|\A(\X - \X_0)\|^2_F &+ \lambda \|P_{\A^T}^\perp \X_0 \|^2_F  \\
&\le \|\A(\X - \X_0)\|^2_F + 2\lambda \|P_{\A^T}^\perp (\X - \X_0) \|^2_F + 2 \lambda \|P_{\A^T}^\perp \X \|^2_F \\
&\le c_2 \|\X - \X_0\|^2_F + 2 \lambda \|P_{\A^T}^\perp \X \|^2_F
\end{align*}
for $c_2 = \max(2\lambda,1).$ Combining the estimates and regrouping, we get
$$
\|\X - \tilde{\X}\|^2_F \le c_1c_2\|\X - \X_0\|^2_F + (2 \lambda c_1 + 2) \|P_{\A^T}^\perp \X \|^2_F.
$$
With $c_\lambda = \max (c_1c_2, 2 \lambda c_1 + 2) = \max(2/\lambda, 6, 2\lambda + 2)$, this concludes the proof of Theorem~\ref{thm:oneside_penalty}. 
\end{proof}

Theorem~\ref{thm:oneside_penalty} shows that the solution to the compressed NMF problem \eqref{eq:nmf} will work for the original uncompressed problem \eqref{eq:comp-nmf} as long as the terms $\| \X - \tilde{\X}\|^2_F$ and $\|P^\perp_{\A^T} \X\|^2_F$ are small. Luckily, with just one more pass over the original data one can get such sketching matrices using the standard approaches of randomized linear algebra, such as those in \cite{halko2011finding}.

\begin{theorem} [``Randomized rangefinder algorithm loss'', \cite{halko2011finding}]\label{thm:tropp}
Let $r,k$ be integers such that $r \geq 2$ and $r+2 \leq k \leq \min \{ m,n\}$.
Let $\X$ be an $m \times n$ matrix and $\S$ be a $n \times k$ standard Gaussian matrix.
Then 
\[
\mathbb{E} \|\X- P_{\X\S} \X\|_F \leq \left(1+\frac{r}{k-r-1}\right)^\frac{1}{2} \left(\sum_{j > r} \sigma_{j}^2(\X) \right)^\frac{1}{2},
\]
    where $\sigma_j(\X)$ is the $j$-th largest singular value of $\X$.
\end{theorem}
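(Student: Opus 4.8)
\emph{Proof proposal.} The statement is exactly the expected-error guarantee for the randomized rangefinder of \cite{halko2011finding}, so I would follow their argument. First I would unwind the notation: consistent with the convention fixed in Remark~\ref{rem:implementation} (where $P_{\A_2^T\X^T}=\Q_2\Q_2^T$ projects onto the columns of $\X\A_2$), the operator $P_{\S^T\X^T}$ is the $m\times m$ orthogonal projector onto $\mathrm{range}(\X\S)$, i.e. $P_{\S^T\X^T}\X = \Q\Q^T\X$ for an orthonormal basis $\Q$ of the range of the sample matrix $\X\S \in \R^{m\times k}$. Thus the quantity to control is the expected Frobenius residual of projecting the columns of $\X$ onto $\mathrm{range}(\X\S)$. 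To expose the randomness I would write the thin SVD $\X=\W\Sigma\Z^T$ and split the right singular vectors as $\Z=[\,\Z_1\ \ \Z_2\,]$, with $\Z_1$ holding the top $r$ directions and $\Sigma=\diag(\Sigma_1,\Sigma_2)$ accordingly. Setting $\Omega_1:=\Z_1^T\S$ and $\Omega_2:=\Z_2^T\S$, rotational invariance of the standard Gaussian $\S$ makes $\Omega_1$ (of size $r\times k$) and $\Omega_2$ independent standard Gaussian matrices.

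The crux is a \emph{deterministic} bound: whenever $\Omega_1$ has full row rank (which holds almost surely, since $k\ge r$),
\[
\|(\I - P_{\S^T\X^T})\X\|_F^2 \ \le\ \|\Sigma_2\|_F^2 + \|\Sigma_2\,\Omega_2\,\Omega_1^\dagger\|_F^2 .
\]
I expect this to be the main obstacle. The mechanism is that $\mathrm{range}(\X\S)=\W\,\mathrm{range}(\Sigma\Omega)$ with $\Sigma\Omega=[\Sigma_1\Omega_1;\ \Sigma_2\Omega_2]$; one exhibits an explicit well-conditioned test matrix inside this range (flattening $\Sigma_1\Omega_1$ via $\Omega_1^\dagger$) and bounds the residual of the resulting projection, which separates into the unavoidable tail energy $\|\Sigma_2\|_F^2$ and a ``leakage'' term $\|\Sigma_2\Omega_2\Omega_1^\dagger\|_F^2$ measuring the overlap of the test matrix with the tail subspace.

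With the deterministic bound in hand I would take expectations in two stages. Conditioning on $\Omega_1$ and using the elementary Gaussian identity $\E\|\B\,\Omega_2\,\K\|_F^2=\|\B\|_F^2\,\|\K\|_F^2$ (valid for fixed $\B,\K$ and standard Gaussian $\Omega_2$, by a direct entrywise computation of $\E[\Omega_2\K\K^T\Omega_2^T]=\|\K\|_F^2\,\I$) gives $\E_{\Omega_2}\|\Sigma_2\Omega_2\Omega_1^\dagger\|_F^2=\|\Sigma_2\|_F^2\,\|\Omega_1^\dagger\|_F^2$. Then I would invoke the inverse-Wishart trace moment: for an $r\times k$ standard Gaussian $\Omega_1$ with $k\ge r+2$, $\E\|\Omega_1^\dagger\|_F^2=\E\,\Tr\!\big((\Omega_1\Omega_1^T)^{-1}\big)=r/(k-r-1)$, where the hypothesis $k\ge r+2$ is exactly what keeps this expectation finite. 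Combining the two stages yields $\E\|(\I-P_{\S^T\X^T})\X\|_F^2\le\big(1+\tfrac{r}{k-r-1}\big)\|\Sigma_2\|_F^2$.

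Finally I would pass from the second moment to the first via Jensen's inequality, $\E\|\cdot\|_F\le(\E\|\cdot\|_F^2)^{1/2}$, and identify $\|\Sigma_2\|_F^2=\sum_{j>r}\sigma_j^2(\X)$, giving the stated bound $\big(1+\tfrac{r}{k-r-1}\big)^{1/2}\big(\sum_{j>r}\sigma_j^2(\X)\big)^{1/2}$. Since this is verbatim Theorem~10.5 of \cite{halko2011finding}, within this paper I would simply cite it rather than reproduce the full argument.
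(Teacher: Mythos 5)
The paper does not prove this statement at all: it is imported verbatim as Theorem~10.5 of \cite{halko2011finding}, and your closing remark that within this paper one would simply cite it is exactly what the authors do. Your reconstruction of the underlying argument is nevertheless correct and is the standard one from that reference (the deterministic split $\|(\I-P)\X\|_F^2\le\|\Sigma_2\|_F^2+\|\Sigma_2\Omega_2\Omega_1^\dagger\|_F^2$, the Gaussian second-moment identity, the inverse-Wishart trace formula $r/(k-r-1)$ under $k\ge r+2$, and Jensen to pass to the first moment), and you correctly resolve the paper's notational quirk that $P_{\S^T\X^T}$ denotes the projector onto $\mathrm{range}(\X\S)$.
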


\begin{corollary}[Data-adapted one-sided sketches]\label{cor:exact-one-sided} Suppose the matrix $\X$ has an approximate nonnegative factorization, that is, there exist $\U_0 \in \R_+^{m \times r}$, $\V_0 \in \R_+^{r \times n}$ so that $\X_0 = \U_0\V_0^T$ satisfies $\|\X-\X_0\|_F \leq \varepsilon \|\X \|_F$. 

Take $k$ such that $2r + 1 \le k \le \min\{m,n\}$. Form a sketch $\X\S$ with $\S$ is an $n \times k$ standard Gaussian matrix; find $\Q$, \revtwo{an} orthonormal basis of the column space of $\X\S$; take a sketching matrix $\A := \Q^T$.  If $\tilde{\U} \in \R_+^{m \times r}, \tilde{\V} \in \R_+^{r \times n}$ solve a compressed NMF problem \eqref{eq:comp-nmf} with this $\A$ and some $\lambda > 0$, then 
\begin{equation}\label{eq:data-adapted-exact}
\frac{\E \|\X - \tilde{\X}\|_F}{\|\X\|_F} \le \sqrt{2} c_\lambda \varepsilon
\end{equation}
and $c_\lambda$ is the constant from \eqref{eq:one-sided-loss}. 
\end{corollary}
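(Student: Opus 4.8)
The plan is to combine Theorem~\ref{thm:oneside_penalty} (the one-sided recovery bound) with Theorem~\ref{thm:tropp} (the randomized rangefinder bound) to control the two error terms in \eqref{eq:one-sided-loss}. The corollary provides an approximate factorization $\X_0 = \U_0\V_0^T$ with $\|\X - \X_0\|_F \le \varepsilon\|\X\|_F$, so the first term in \eqref{eq:one-sided-loss} is immediately bounded: $\|\X - \X_0\|_F^2/\|\X\|_F^2 \le \varepsilon^2$. The main content is to show that the second term, $\|P_\A^\perp \X\|_F^2/\|\X\|_F^2$, is also controlled by $\varepsilon^2$ in expectation when $\A = \Q^T$ is the data-adapted sketch.

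First I would observe that with $\A = \Q^T$, where the columns of $\Q$ form an orthonormal basis of the column space of $\X\S$, we have $P_\A = \A^T\A = \Q\Q^T = P_{\X\S}$, so $\|P_\A^\perp \X\|_F = \|\X - P_{\X\S}\X\|_F$ is exactly the randomized rangefinder error. The subtlety is that Theorem~\ref{thm:tropp} is stated with the projection $P_{\S^T\X^T}$ acting on the left via the sketch $\S^T\X^T$, whereas here the sketch is $\X\S$ on the right; these describe the same range (the column space of $\X\S$ equals the range captured by sketching $\X^T$ on the left, after transposition), so applying the theorem to $\X$ with the stated parameters gives $\E\|P_\A^\perp\X\|_F = \E\|\X - P_{\X\S}\X\|_F \le (1 + r/(k-r-1))^{1/2}(\sum_{j>r}\sigma_j^2(\X))^{1/2}$. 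With the choice $k \ge 2r+1$, the factor $r/(k-r-1) \le r/r = 1$, so the prefactor is at most $\sqrt{2}$.

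Next I would bound the tail of singular values by the approximation error. Since $\X_0$ has rank at most $r$, the Eckart–Young theorem gives $(\sum_{j>r}\sigma_j^2(\X))^{1/2} = \min_{\operatorname{rank}(\M)\le r}\|\X - \M\|_F \le \|\X - \X_0\|_F \le \varepsilon\|\X\|_F$. Combining the last two displays yields $\E\|P_\A^\perp\X\|_F \le \sqrt{2}\,\varepsilon\|\X\|_F$, hence $\E\|P_\A^\perp\X\|_F^2 \le 2\varepsilon^2\|\X\|_F^2$ after squaring — though care is needed here since $\E\|\cdot\|_F^2 \ge (\E\|\cdot\|_F)^2$ by Jensen, so I would instead keep the bound at the level of expected Frobenius norms rather than squared norms throughout.

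To handle the squared-versus-unsquared norm issue cleanly, the plan is to take the expectation of the square root of \eqref{eq:one-sided-loss}: using subadditivity of the square root, $\|\X-\tilde\X\|_F \le \sqrt{c_\lambda}(\|\X-\X_0\|_F + \|P_\A^\perp\X\|_F)$ (absorbing the $\sqrt 2$ from splitting the two terms into the constant, matching the $\sqrt{2}c_\lambda$ form). Taking expectations and inserting the two bounds $\|\X-\X_0\|_F \le \varepsilon\|\X\|_F$ (deterministic) and $\E\|P_\A^\perp\X\|_F \le \sqrt 2\,\varepsilon\|\X\|_F$ gives $\E\|\X-\tilde\X\|_F \le \sqrt 2\, c_\lambda\,\varepsilon\|\X\|_F$, which is exactly \eqref{eq:data-adapted-exact} after dividing by $\|\X\|_F$. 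The main obstacle I anticipate is precisely reconciling the squared bound of Theorem~\ref{thm:oneside_penalty} with the unsquared expectation in Theorem~\ref{thm:tropp}; the honest route is to apply $\sqrt{\cdot}$ to \eqref{eq:one-sided-loss} before taking expectations, and to verify that the constant bookkeeping (the $\sqrt 2$ factors from both the range approximation prefactor and from $\sqrt{a+b}\le\sqrt a+\sqrt b$) collapses into the stated $\sqrt 2\, c_\lambda$.
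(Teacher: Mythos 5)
Your proposal is correct and follows essentially the same route as the paper: apply Theorem~\ref{thm:tropp} with $k \ge 2r+1$ and the Eckart--Young bound $(\sum_{j>r}\sigma_j^2(\X))^{1/2} \le \|\X-\X_0\|_F$ to get $\E\|P_\A^\perp\X\|_F \le \sqrt{2}\,\varepsilon\|\X\|_F$, then combine with Theorem~\ref{thm:oneside_penalty} at the level of unsquared norms. Your explicit care with the Jensen/square-root step (taking $\sqrt{\cdot}$ of \eqref{eq:one-sided-loss} before taking expectations, so the constant becomes $\sqrt{c_\lambda}(1+\sqrt{2}) \le \sqrt{2}\,c_\lambda$ since $c_\lambda \ge 6$) is if anything slightly more precise than the paper's own bookkeeping.
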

\begin{proof}
By Theorem~\ref{thm:tropp} and approximate low-rankness of $\X$, we have
\[
\mathbb{E} \|\X- P_{\A^T} \X\|_F \leq \sqrt{1+\frac{r}{k-r-1}} \sqrt{\sum_{j > r} \sigma_{j}(\X)} \le \sqrt{2} \|\X- \X_0\|_F \le  \sqrt{2} \varepsilon \|\X \|_F,
\]
using that $k \geq 2r+1$ in the second inequality.
Combining this with Theorem~\ref{thm:oneside_penalty}, we have
\[
\E \|\X - \tilde{\X}\|_F  \le 
 \revtwo{c_\lambda} (\| \X - \X_0\|_F + \E\|\X- P_{\A^T} \X\|_F) \leq \revtwo{c_\lambda} (1 + \sqrt{2}) \eps \|\X \|_F,
\]
where $\revtwo{c_\lambda} = \max(2/\lambda, 6, 2\lambda + 2).$
\end{proof}
\begin{remark}
It is easy to see that the oversampling condition $k > 2r$ can be relaxed to any $k > r + 1$ by suitably increasing the constant factor $\sqrt{2}$. Notwithstanding this factor, we see that if $\X$ has an exact NMF decomposition of rank $r$ and $k > r + 1$ then the error of the optimal solution to the compressed problem must be also zero, comparable with the result of Theorem~\ref{thm:exact-two-sided}. 

A high probability deviation bound for the loss $\|\X - \Q\Q^T \X\|_F$ is also known \cite[Theorem 10.7]{halko2011finding}. It implies a high probability estimate for \eqref{eq:data-adapted-exact} in a straightforward way. Instead of Gaussian initial sketching, one can employ subsampled random Fourier transform  \cite{halko2011finding} or other cost-efficient matrices \cite{saibaba2023randomized, ghashami2016frequent}. \revtwo{Moreover, similar results are available for the spectral norm instead of Frobenius (e.g., \cite[Theorem~10.3]{halko2011finding}), and the results of Theorem~\ref{thm:oneside_penalty} as well as Corollary~\ref{cor:exact-one-sided} extend to the spectral norm loss. An interesting question for the future work is how to extend similar results to the other standard losses appearing in the applications of the nonnegative matrix factorization problem, such as $\beta$-divergence \cite{fevotte2011algorithms} or $L_{2,1}$-norm loss \cite{kong2011robust}. }
\end{remark}

\subsection{One-sided compression: nonorthogonal sketching matrices} The orthogonality assumption on $\A$ can be relaxed to having approximately orthogonal rows, such as those of  appropriately normalized random matrices. This case is more than a straightforward extension of Theorem~\ref{thm:oneside_penalty} because of the following computational challenge: if the sketching matrix $\A$ does not have orthogonal rows, the orthogonal projection operator $P^\perp_{\A^T}$ does not have a nicely decomposable form $\A^T\A$. Theorem~\ref{thm: apx orthogonal A} below shows how to having projection matrices in the regularizer term.

\begin{definition} [Approximately orthogonal matrices]
For a positive constant $\eps < 1$, we call a matrix $\A \in \R^{k \times m}$ $\eps$-approximately orthogonal if its \revtwo{nonzero} singular values lie in the interval $[1-\eps,1+\eps]$.
\end{definition}
The convenience of the definition above stems from the following simple observation.
\begin{lemma}
If the matrix $\A \in \R^{k \times m}$ is $\eps$-approximately orthogonal, then for any $\M \in \R^{m \times n}$ matrix, we have 
\begin{equation}\label{eq:approx-orth}
(1 - \eps) \| P_{\A^T}\M\|_F \le \|\A\M\|_F \le (1 + \eps) \|P_{\A^T}\M\|_F.
\end{equation}
\end{lemma}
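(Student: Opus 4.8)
The plan is to reduce the claimed two-sided bound to a statement about how the singular values of $\A$ distort norms, using the singular value decomposition of $\A$. First I would write $\A = \U_\A \Sigma \V_\A^T$, where $\Sigma$ is $k \times m$ carrying the singular values $\sigma_1, \dots, \sigma_k$ on its diagonal, and $\U_\A \in \R^{k\times k}$, $\V_\A \in \R^{m \times m}$ are orthogonal. The $\eps$-approximate orthogonality hypothesis is exactly the statement that every $\sigma_i \in [1-\eps, 1+\eps]$; in particular all rows are linearly independent, so $\A$ has full row rank $k$ and $P_\A$ projects onto the row space, i.e. the column space of $\A^T$.

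The key computation is to express both $\|\A \M\|_F$ and $\|P_\A \M\|_F$ in terms of the same quantity. Since $\U_\A$ is orthogonal, $\|\A\M\|_F = \|\Sigma \V_\A^T \M\|_F$, and because $\Sigma$ merely scales each of the first $k$ coordinates (in the $\V_\A$ basis) by the corresponding $\sigma_i$ and annihilates the rest, one gets
\begin{equation*}
(1-\eps)\,\|P_\A \M\|_F \le \|\Sigma \V_\A^T \M\|_F \le (1+\eps)\,\|P_\A \M\|_F,
\end{equation*}
provided $\|P_\A \M\|_F$ equals the Frobenius norm of $\V_\A^T\M$ restricted to its first $k$ rows. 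To see that identification, I would note $P_\A = \A^T(\A\A^T)^{-1}\A$, and substituting the SVD gives $P_\A = \V_\A \,\mathrm{diag}(\I_k, \mathbf{0})\,\V_\A^T$, so $\|P_\A \M\|_F^2$ is precisely the sum of squared entries of the top $k$ rows of $\V_\A^T \M$. The inequality then follows row-by-row: each retained coordinate is scaled by a factor $\sigma_i$ lying in $[1-\eps,1+\eps]$, so the squared Frobenius norm is squeezed between $(1-\eps)^2$ and $(1+\eps)^2$ times the projected norm, and taking square roots yields \eqref{eq:approx-orth}.

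The only mild subtlety — the part I would be most careful about rather than the hard part — is making sure the projection $P_\A$ is correctly identified with the \emph{row}-space projector and that the diagonal scaling acts on exactly the coordinates surviving the projection; once the SVD is in place this is bookkeeping, and the full-row-rank guarantee from $\sigma_i \ge 1-\eps > 0$ ensures $(\A\A^T)^{-1}$ exists so that $P_\A$ is well defined. There is no genuine obstacle here: the result is an elementary consequence of the SVD and the definition of the Frobenius norm as an orthogonally invariant, coordinatewise quantity.
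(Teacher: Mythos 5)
Your proof is correct and follows essentially the same route as the paper's: both pass to the SVD of $\A$, identify $P_\A$ with the projector onto the row space of $\A$ so that $\|P_\A\M\|_F = \|\V^T\M\|_F$ and $\|\A\M\|_F = \|\boldsymbol{\Sigma}\V^T\M\|_F$, and then use that all singular values lie in $[1-\eps,1+\eps]$. Your final step (scaling each retained row of $\V^T\M$ by its $\sigma_i$ and comparing squared Frobenius norms directly) is if anything slightly cleaner than the paper's triangle-inequality estimate via $\|\I-\boldsymbol{\Sigma}\|$, but the argument is the same in substance.
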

\begin{proof}
For a positive semidefinite matrix $\Z$, let $\sqrt{\Z}$ denote the unique positive semidefinite matrix such that $(\sqrt{\Z})^2 = \Z$. Then, if $\A = \U\boldsymbol{\Sigma} \V^T$ is a compact SVD decomposition of $\A$, $\sqrt{\A^T\A} =  \V \boldsymbol{\Sigma}\V^T$ and $P_{\A^T} = \V\V^T$. 
This implies
$
\|P_{\A^T} \M\|_F = \|\V^T\M\|_F, 
$ 
$\|\A\M\|_F 
= \| \sqrt{\A^T\A}\M \|_F  = \| \boldsymbol{\Sigma} \V^T\M \|_F$ and 
\begin{align*}
(1 - \eps)\|P_{\A^T}\M\|_F &\le ( 1 - \|\I - \boldsymbol{\Sigma}\|) \| \V^T\M \|_F\le \| \V^T\M \|_F  - \| (\I - \boldsymbol{\Sigma}) \V^T\M \|_F \\ &\le \| \boldsymbol{\Sigma} \V^T\M \|_F \le \|\boldsymbol{\Sigma}\|\|\V^T\M \|_F \le (1 + \eps) \|P_{\A^T}\M\|_F.
\end{align*}
\end{proof}

 The next theorem justifies solving a compressed NMF problem with a simplified regularization term:
\begin{theorem} \label{thm: apx orthogonal A}(Approximately orthogonal $\A$)
Let $\X \in \R_+^{m \times n}$ be any matrix and let $\A \in \R^{k \times m}$ be $\eps$-approximately orthogonal, with $\eps \leq 0.5$.
Let $\U_0 \in \R_+^{m \times r}$, $\V_0 \in \R_+^{r \times n}$ give a solution to the original NMF problem \eqref{eq:nmf} of rank $r$ and $\X_0 = \U_0\V_0^T$. If $\tilde{\U},\tilde{\V}$  solve the following compressed NMF problem with the same rank $r$
 \begin{equation}\label{comp-nmf-2}
     \tilde{\U},\tilde{\V} \revtwo{\; \in \;}  \argmin_{\U,\V \geq 0}\left[ \|\A(\X-\U\V^T)\|^2_F +  \lambda \| \U\V^T\|^2_F\right],
\end{equation}
 \revtwo{then} $\tilde{\X} := \tilde{\U}\tilde{\V}^T$ satisfies
\begin{equation}\label{eq:approx-orth-target-error}\frac{\left\|\X - \left( 1+ \lambda \right)\tilde{\X}\right\|^2_F}{\|\X\|^2_F} \leq c\left[\frac{\|\X - \revised{\X_0}\|^2_F}{\|\X\|^2_F} + \frac{\|P^\perp_{\A^T} \X\|^2_F}{\|\X\|^2_F} + \varepsilon^2 \right]
\end{equation}
where $c = \max(4+\frac{5}{4\lambda},6, 48\lambda)$.
\end{theorem}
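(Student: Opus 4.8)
\emph{A preliminary reading note: I treat the first term on the right-hand side of \eqref{eq:approx-orth-target-error} as the modeling error $\|\X-\X_0\|_F^2$, matching the role it plays in Theorem~\ref{thm:oneside_penalty}; with $\|\X-\tilde\X\|_F^2$ there the bound would be circular.}

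The plan is to follow the template of Theorem~\ref{thm:oneside_penalty}, the two new ingredients being the unprojected regularizer $\lambda\|\U\V^T\|_F^2$ and the merely approximate orthogonality of $\A$. Write the compressed objective as $f(\Z):=\|\A(\X-\Z)\|_F^2+\lambda\|\Z\|_F^2$ and take the compact SVD $\A=\U\boldsymbol{\Sigma}\V^T$ as in the lemma above, so $P_\A=\V\V^T$ and $\A P_\A^\perp=\mathbf 0$. On the row space of $\A$ the problem is a ridge regression whose unconstrained minimizer has coordinates $\D_\lambda\V^T\X$, where $\D_\lambda:=\boldsymbol{\Sigma}^2(\boldsymbol{\Sigma}^2+\lambda\I)^{-1}$ is diagonal; in the exactly orthogonal case $\boldsymbol{\Sigma}=\I$ this gain is $\tfrac1{1+\lambda}\I$, which is exactly why $\X$ should be compared against $(1+\lambda)\tilde\X$. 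Accordingly, the feasible comparison point I would use is not $\X_0$ but $\tfrac1{1+\lambda}\X_0=(\tfrac1{1+\lambda}\U_0)\V_0^T$, which is still nonnegative of rank $r$; feasibility in \eqref{comp-nmf-2} then yields the single inequality $f(\tilde\X)\le f(\tfrac1{1+\lambda}\X_0)$ that drives everything.

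First I would complete the square. Since $\A$ kills $P_\A^\perp$, $f(\Z)=\|\boldsymbol{\Sigma}\V^T(\X-\Z)\|_F^2+\lambda\|\V^T\Z\|_F^2+\lambda\|P_\A^\perp\Z\|_F^2$, and completing the square in the coordinate $\V^T\Z$ gives
\[
f(\Z)=\big\|(\boldsymbol{\Sigma}^2+\lambda\I)^{1/2}\big(\V^T\Z-\D_\lambda\V^T\X\big)\big\|_F^2+\lambda\|P_\A^\perp\Z\|_F^2+\text{const},
\]
where the constant does not depend on $\Z$. Substituting $\Z=\tilde\X$ and $\Z=\tfrac1{1+\lambda}\X_0$ into $f(\tilde\X)\le f(\tfrac1{1+\lambda}\X_0)$ the constant cancels, so both $\|(\boldsymbol{\Sigma}^2+\lambda\I)^{1/2}(\V^T\tilde\X-\D_\lambda\V^T\X)\|_F^2$ and $\lambda\|P_\A^\perp\tilde\X\|_F^2$ are bounded by the corresponding expressions at $\tfrac1{1+\lambda}\X_0$. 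Because $\eps\le\tfrac12$ forces $\boldsymbol{\Sigma}^2+\lambda\I\succeq(0.25+\lambda)\I$ and $\preceq(2.25+\lambda)\I$, these turn into usable bounds on $\|\V^T\tilde\X-\D_\lambda\V^T\X\|_F$ and on $\|P_\A^\perp\tilde\X\|_F$.

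Then I would assemble the target. Splitting $\|\X-(1+\lambda)\tilde\X\|_F^2$ into its $P_\A$ and $P_\A^\perp$ parts (as in \eqref{eq:triangle}), the out-of-range part is at most $2\|P_\A^\perp\X\|_F^2+2(1+\lambda)^2\|P_\A^\perp\tilde\X\|_F^2$, whose second summand is controlled above. For the in-range part, applying $\V^T$ and inserting $\D_\lambda\V^T\X$,
\[
\V^T\X-(1+\lambda)\V^T\tilde\X=\big(\I-(1+\lambda)\D_\lambda\big)\V^T\X+(1+\lambda)\big(\D_\lambda\V^T\X-\V^T\tilde\X\big),
\]
where the second term is bounded in the previous step and the first term is the decisive one. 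It remains to convert the comparison-point quantities back into the stated ones: $\|P_\A^\perp\X_0\|_F\le\|P_\A^\perp\X\|_F+\|\X-\X_0\|_F$, and the gap $\D_\lambda\V^T\X-\tfrac1{1+\lambda}\V^T\X_0$ splits as $(\D_\lambda-\tfrac1{1+\lambda}\I)\V^T\X+\tfrac1{1+\lambda}\V^T(\X_0-\X)$. Collecting the three resulting groups and feeding in the $\eps\le\tfrac12$ spectral bounds produces the constant $c=\max(4+\tfrac5{4\lambda},6,48\lambda)$.

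The hard part will be getting $\eps^2$ rather than $\eps$. A crude replacement of $\|\A\cdot\|_F$ by $\|P_\A\cdot\|_F$ through \eqref{eq:approx-orth} only costs a factor $(1\pm\eps)$ and so contributes at order $\eps$, which is too weak. The saving is a first-order cancellation visible in the two decisive diagonal matrices $\I-(1+\lambda)\D_\lambda$ and $\D_\lambda-\tfrac1{1+\lambda}\I$: both have $i$-th entry a multiple of $\lambda(1-\sigma_i^2)$, hence vanish at $\sigma_i=1$ and have operator norm $O(|1-\sigma_i^2|)=O(\eps)$; applied to $\V^T\X$ and squared against $\|\V^T\X\|_F^2\le\|\X\|_F^2$, each yields the $\eps^2\|\X\|_F^2$ term. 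This same mechanism explains why the comparison point must be rescaled to $\tfrac1{1+\lambda}\X_0$: against $\X_0$ itself the relevant diagonal would be $\I-\D_\lambda=\lambda(\boldsymbol{\Sigma}^2+\lambda\I)^{-1}$, of order one rather than order $\eps$, and the bound would break.
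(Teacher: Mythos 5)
Your proposal is correct in substance, and you rightly diagnose the typo in the statement: the paper's own proof bounds the target by the modeling error $\|\X-\X_0\|_F^2$, not by $\|\X-\tilde\X\|_F^2$. The overall architecture coincides with the paper's: both arguments start from feasibility of a rescaled comparison point ($\X_0/(1+\lambda)$ for you, $\X_0/(1+\delta)$ with $\delta=\lambda/(1-\eps)^2$ in the paper), both complete the square so that the $(1+\lambda)$-rescaled iterate appears as the natural object to bound, and both assemble the final estimate with the Pythagorean inequality \eqref{eq:triangle}. Where you genuinely diverge is the treatment of approximate orthogonality. The paper first replaces $\|\A\M\|_F$ by $\|\Q\M\|_F$ via the two-sided bounds \eqref{eq:approx-orth} and absorbs the mismatch into an additive term written as $3\eps^2\|\Q(\X-\Y)\|_F^2$; since $(1+\eps)^2/(1-\eps)^2 = 1+4\eps/(1-\eps)^2$, that step as written really only yields an $O(\eps)$ correction, which is precisely the weakness you flag. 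Your alternative --- completing the square exactly in the SVD coordinates of $\A$ with the ridge gain $\D_\lambda=\boldsymbol{\Sigma}^2(\boldsymbol{\Sigma}^2+\lambda\I)^{-1}$, so that the entire $\eps$-dependence is funneled through the diagonal matrices $\I-(1+\lambda)\D_\lambda$ and $\D_\lambda-\tfrac{1}{1+\lambda}\I$, whose entries are multiples of $\lambda(1-\sigma_i^2)=O(\lambda\eps)$ and vanish at $\sigma_i=1$ --- is a cleaner and more robust mechanism for extracting the quadratic $\eps^2$ rate, at the price of a more coordinate-heavy computation. The one caveat is the constant: your route will produce an $\eps^2$ coefficient scaling like $\lambda^2$ times absolute constants (after the $(1+\lambda)^2$ and $(\boldsymbol{\Sigma}^2+\lambda\I)^{-1}\succeq(\tfrac14+\lambda)^{-1}\I$ factors are tracked) rather than reproducing $48\lambda$ exactly, and matching $\max(4+\tfrac{5}{4\lambda},6,48\lambda)$ verbatim is optimistic; since the content of the theorem is the form of the bound, this is cosmetic rather than a gap.
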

\begin{proof}

By optimality, for any matrix $\Y = \U\V^T$ for some nonnegative $\U$ and $\V$ of the appropriate size (to be the scaled version of $\X_0$ as specified below) we have
\[
\|\A(\X-\tilde{\X})\|^2_F +  \lambda \| \tilde{\X}\|^2_F \leq \|\A(\X-\Y)\|^2_F +  \lambda \| \Y\|^2_F.
\]
Approximate orthogonality in the form of \eqref{eq:approx-orth} applied to the matrices $\M = \X -\tilde{\X}$ and $\M = \X -\Y$ allows to orthogonalize this inequality:
\[
(1-\varepsilon)^2 \|\Q(\X-\tilde{\X})\|^2_F + \lambda \| \tilde{\X}\|^2_F \leq (1+\varepsilon)^2 \|\Q(\X-\Y)\|^2_F +  \lambda \|\Y\|^2_F,
\]
where $\Q$ denotes the $k \times m$ matrix with orthogonal rows such that $P_{\A^T} = \Q^T\Q$. Indeed, this implies  
\begin{equation}\label{eq:q}
    \|P_{\A^T} \M\|_F = \|\Q^T\Q \M\|_F =  \|\Q \M\|_F
\end{equation}
for any $\M \in \R^{m \times n}.$
So, \begin{equation}\label{eq:approx-orth-mid-step}
\|\Q(\X-\tilde{\X})\|^2_F + \delta \|\tilde{\X}\|^2_F \leq \|\Q(\X-\Y)\|^2_F +  \delta \|\Y\|^2_F + 3\varepsilon^2\|\Q(\X-\Y)\|^2_F.
\end{equation}
with $\delta = \lambda/(1-\varepsilon)^2$.
\bigskip

We will further rearrange the optimality condition using the following identity based on completion of the square on both sides of \eqref{eq:approx-orth-mid-step}: for any matrices $\M_1$, $\M_2$ of appropriate size,
\begin{align*}
\|\M_1 - \M_2\|_F^2 + \delta \|\M_2\|_F^2 &= \|\M_1\|_F^2 + (1 + \delta) \|\M_2\|_F^2 - 2 \langle \M_1, \M_2\rangle  \\ &= \frac{\delta}{1 + \delta} \|\M_1\|_F^2 + \frac{1}{1 + \delta}\left\|\M_1-(1 + \delta) \M_2\right\|^2_F. 
\end{align*}
Using this identity for $\M_1 = \Q\X$ \revtwo{for both sides} and $\M_2 = \Q \tilde{\X}$ on the left and $\M_2 = \Q\Y$ on the right of \eqref{eq:approx-orth-mid-step}, we obtain
\begin{align*}
&\frac{\delta}{1 + \delta} \|\Q \X\|_F^2 + \frac{1}{1 + \delta}\|\Q(\X-(1 + \delta)\tilde{\X})\|^2_F + \delta \| P_{\A^T}^\perp \tilde{\X}\|^2_F \\ &\leq \frac{\delta}{1 + \delta} \|\Q \X\|_F^2 + \frac{1}{1 + \delta}\|\Q(\X-(1 + \delta)\Y)\|^2_F +  \delta \|P_{\A^T}^\perp \Y\|^2_F + 3\varepsilon^2\|\Q(\X-\Y)\|^2_F.
\end{align*}
Canceling common terms, letting $\Y := \X_0/(1+\delta)$, we have
\begin{align*}
\frac{1}{1 + \delta}&\left\|\Q(\X-(1 + \delta)\tilde{\X})\right\|^2_F + \frac{\delta}{(1 + \delta)^2} \left\| P_{\A^T}^\perp (1 + \delta)\tilde{\X}\right\|^2_F \\ &\leq  \frac{1}{1 + \delta}\left\|\Q(\X-\X_0)\right\|^2_F +  \delta \left\|P_{\A^T}^\perp \frac{\X_0}{1+\delta}\right\|^2_F + 3\varepsilon^2\left\|\Q(\X-\frac{\X_0}{1+\delta})\right\|^2_F =: \W.
\end{align*}

To estimate the loss on the uncompressed problem, we use \eqref{eq:triangle} and \eqref{eq:q} with the matrices $\X$, $(1+\delta)\tilde{\X}$ and $\Q$ to get
\begin{align*}
\|\X - (1+\delta) \tilde{\X}\|^2_F &\leq \|\Q(\X- (1+\delta) \tilde{\X})\|^2_F + 2\|P_{\A^T}^\perp(1+\delta) \tilde{\X}\|^2_F + 2\|P_{\A^T}^\perp \X\|^2_F \\
&\le \frac{2(1 + \delta)^2}{\delta} \W + 2\|P_{\A^T}^\perp \X\|^2_F \\
&\le \frac{2(1 + \delta)}{\delta}\|\Q(\X-\X_0)\|^2_F  + \frac{6\varepsilon^2}{\delta}\|\Q((1 + \delta)\X-\X_0)\|^2_F \\
&\quad\quad\quad\quad\quad\quad\quad\quad\quad+  2 \|P_{\A^T}^\perp \X_0\|^2_F + 2\|P_{\A^T}^\perp \X\|^2_F. 
\end{align*}
\revtwo{Now, since $ \|P_{\A^T}^\perp \X_0\|^2_F \leq 2\|P_{\A^T}^\perp (\X-\X_0)\|^2_F + 2\|P_{\A^T}^\perp \X\|^2_F$, and then using $\delta = \lambda/(1 - \eps)^2$ and $\varepsilon \leq 0.5$, we can finally bound $\|\X - (1+\delta) \tilde{\X}\|^2_F$ by}
\begin{align*}
 \left(\frac{2 + 12\eps^2}{\delta} + 2\right)&\|\Q(\X-\X_0)\|^2_F + 4\|P_{\A^T}^\perp(\X-\X_0)\|^2_F + 6\|P_{\A^T}^\perp \X\|^2_F + 12\eps^2\delta \|\Q\X\|_F^2 \\
&\le \left( 4 + \frac{5}{4\lambda} \right) \|\X- \X_0\|^2_F + 6 \|P_{\A^T}^\perp \X \|^2_F + 12 \frac{\varepsilon^2\lambda}{(1 -\eps)^2} \|\X\|_F^2.
\end{align*}
\end{proof}

\begin{remark}\label{rem:apx-orth-discussion} We conclude the section with the discussion  of  Theorem~\ref{thm: apx orthogonal A} result.  We note that
\begin{itemize}
  \item  \revised{Observe that the proofs of Theorem~\ref{thm: apx orthogonal A} and Theorem~\ref{thm:oneside_penalty} above rely on the nonnegativity constraint only to ensure that the solution of the original NMF problem remains feasible for the compressed one.}
    \item Theorem~\ref{thm: apx orthogonal A} shows it is possible to regularize the compressed NMF problem without the projection operator and to find a $(1+\lambda)$-rescaled factors. Note that the rescaling does not affect the learned nonnegative low-rank structure.
\item The property \eqref{eq:approx-orth} $\|P_{\A^T}\M\|^2_F \sim \|\A\M\|^2_F$ is \revtwo{ very different from } the standard geometry preservation properties of the form  $\|\M\|^2_F \sim \|\A\M\|^2_F$, such as Johnson-Lindenstrauss, oblivious subspace embedding, or restricted isometry property (which can be shown to fail for, e.g., random Gaussian matrices $\A \in \R^{k \times m}$ as long as $k < m$). In contrast, the approximate orthogonality property \eqref{eq:approx-orth}  is not hard to satisfy with generic random matrices. For example, an i.i.d.\ Gaussian matrix $\A \in \R^{k \times m}$ with each entry having mean $0$ and variance $\frac{1}{m}$ is $\eps$-approximately orthogonal with probability $1 - 2 \exp(-\eps^2m/8)$ as soon as $k \ge  m\eps^2/4$ (by \cite[Corollary~5.35]{Vershynin11}).
\item While it is easy to guarantee approximate orthogonality with generic matrices $\A$ (not learned from $\X$), the term $P_{\A^T}^\perp \X$  is still the part of the error bound. So, data-oblivious one-sided compression in general is not expected to result in exact recovery even if data matrix $\X$ admits exact nonnegative factorization.
\end{itemize}
\end{remark}

\subsection{Nonnegativity in compression}\label{sec:nonnegativity}

In the further sections, we consider the variations of multiplicative updates algorithm to iteratively minimize our proposed sketched objective functions.
The multiplicative updates algorithm is valid due to the fact that the matrices involved in the iterative process are nonnegative.
Note that this convenient property is destroyed by sketching unless we have that $\A^T\A$ is an element-wise nonnegative matrix. However, the nonnegativity of the sketching matrix is not expected to hold neither for approximately orthonormal random sketches nor for the data-adapted sketching matrices coming from randomized rangefinder algorithm. 

\revtwo{In this section, we show how to overcome this issue: it actually suffices} to add some extra penalty terms taking the form
\begin{equation}\label{eq:sigma-regularizer}
\sigma \|\mathbf{1}_m^T (\X-\U\V^T)\|^2 \quad \text{and/or} \quad \sigma \| (\X-\U\V^T)\mathbf{1}_n \|^2,
\end{equation}
where $\mathbf{1}_m$ is a vector of all ones in $\R^m$.

\begin{corollary}\label{cor:shifted-two-sided}
Suppose $\X$ has an exact nonnegative factorization $\X = \U_0\V_0^T$, where $\U_0 \in \R^{m \times r}$, $\V_0 \in \R^{n \times r}$ \revised{with} $r \le \min\{n,m\}$.
Let $\A_1$ and $\A_2$ be generic random matrices of sizes $k_1 \times m$ and $n \times k_2$, respectively, \revised{with $k_1,k_2 \geq r$}.
If for some $\lambda_1, \lambda_2 > 0$ and $\sigma_1, \sigma_2 \ge 0$
\begin{align}\label{eq:two-sided-2}
 \tilde{\U}, \tilde{\V} \revtwo{\; \in \;}  \argmin_{\U, \V \ge 0} L&(\X-\U\V^T) \\
 &+ \sigma_1 \|\mathbf{1}_m^T (\X-\U\V^T)\|^2 + \sigma_2 \| (\X-\U\V^T)\mathbf{1}_{n}\|^2, \nonumber
\end{align}
where $L(\X-\U\V^T) := \|\A_1 (\X- \U\V^T)\|_F^2 + \|(\X-\U\V^T)\A_2\|_F^2 + \lambda_1\|P_{\X\A_2}^\perp \U\V^T\|_F^2 + \lambda_2\|\U\V^T P_{\X^T\A_1^T}^\perp\|_F^2,$
then $\X = \tilde{\U}\tilde{\V}^T$. 
\end{corollary}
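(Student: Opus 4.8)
The plan is to derive this as an immediate consequence of Theorem~\ref{thm:exact-two-sided}, observing that the two additional shift terms in \eqref{eq:two-sided-2} are inert for the exact-recovery conclusion. The key structural fact is that the objective in \eqref{eq:two-sided-2} is a sum of six squared Frobenius norms --- the four terms comprising $L(\X-\U\V^T)$ together with $\sigma_1\|\mathbf{1}_m^T(\X-\U\V^T)\|^2$ and $\sigma_2\|(\X-\U\V^T)\mathbf{1}_n\|^2$ --- each of which is nonnegative.

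First I would substitute the ground-truth factors $\U_0,\V_0$ with $\X=\U_0\V_0^T$. Then $\X-\U_0\V_0^T=\mathbf{0}$, so every one of the six terms vanishes and the objective value at $(\U_0,\V_0)$ equals $0$; in particular this pair is feasible and attains the global minimum. Since $\tilde{\U},\tilde{\V}$ are optimal, they too must attain objective value $0$, and because each summand is nonnegative, each summand must separately vanish at $(\tilde{\U},\tilde{\V})$. In particular the four terms making up $L(\X-\tilde{\U}\tilde{\V}^T)$ are all zero, which is exactly the pair of conditions \eqref{eq:sum-12-vanish} and \eqref{eq:sum-34-vanish} used in the proof of Theorem~\ref{thm:exact-two-sided}. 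One need not even use that the two $\sigma$-terms vanish: it suffices that $L(\X-\tilde{\U}\tilde{\V}^T)=0$, which follows from the full objective being $0$ together with $L\ge 0$.

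From this point the argument of Theorem~\ref{thm:exact-two-sided} applies verbatim. Genericity of $\A_1$ and $\A_2$ guarantees that $\A_1\X\A_2=(\A_1\U_0)(\V_0^T\A_2)$ is invertible --- both factors are $k\times k$ and invertible for all but a measure-zero set of $\A_1,\A_2$, since $\U_0,\V_0$ have full rank $k$ --- so the chain \eqref{two-sided-1} and the ensuing computation $\M=(\A_1\X\A_2)^{-1}$ go through unchanged and yield $\tilde{\U}\tilde{\V}^T=\X\A_2(\A_1\X\A_2)^{-1}\A_1\X=\X$.

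There is no genuine analytic obstacle here; the only thing to verify carefully is the reduction itself --- namely that appending the nonnegative penalties \eqref{eq:sigma-regularizer} can only shrink, never enlarge, the set of global minimizers relative to $L$ alone, because the minimum value remains $0$ and is still attained at the exact factorization. Equivalently, every minimizer of $L+\sigma_1(\cdot)+\sigma_2(\cdot)$ is also a minimizer of $L$, and Theorem~\ref{thm:exact-two-sided} already characterizes the latter by $\tilde{\U}\tilde{\V}^T=\X$. I would also flag the bookkeeping on matrix dimensions --- the roles of $\A_1,\A_2$ and of the two ones-vectors --- matching them to the transposition convention of \eqref{eq:two-sided-2}, but this is purely cosmetic and does not affect the logical structure of the proof.
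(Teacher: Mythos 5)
Your proposal is correct and follows essentially the same route as the paper: the authors likewise observe that the exact factorization $(\U_0,\V_0)$ makes every added penalty term vanish, so the optimal value of the augmented objective is still $0$, and the conclusion then follows from (the proof of) Theorem~\ref{thm:exact-two-sided} since all nonnegative summands, in particular those of $L$, must vanish at the minimizer. Your explicit remark that one must invoke the \emph{argument} of Theorem~\ref{thm:exact-two-sided} (the implication ``zero objective $\Rightarrow$ $\tilde{\U}\tilde{\V}^T=\X$'') rather than merely its statement is exactly the right point of care, and matches what the paper implicitly does.
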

\begin{proof}
Note that
\begin{align*} 
\min_{\U,\V \geq 0} L(\X-\U\V^T) &+ \sigma_1 \|\mathbf{1}_m^T (\X-\U\V^T)\|^2 \\
&\leq \min_{\substack{\U,\V \geq 0 \\ \mathbf{1}_m^T\X = \mathbf{1}_m^T\U\V^T}} L(\X-\U\V^T) + \sigma_1 \|\mathbf{1}_m^T (\X-\U\V^T)\|^2 \\
&= \min_{\substack{\U,\V \geq 0 \\ \mathbf{1}_m^T\X = \mathbf{1}_m^T\U\V^T}} L(\X-\U\V^T),
\end{align*}
and similarly for adding the term $\sigma_2 \| (\X-\U\V^T)\mathbf{1}_n\|^2_F$. Then the statement follows directly from Theorem~\ref{thm:exact-two-sided}.
\end{proof}

When we do not assume that $\X$ \revised{has an exact nonnegative decomposition of dimension $r$}, adding the regularizer of the form \eqref{eq:sigma-regularizer} is still possible under an additional condition, essentially imposing that the column-sums of $\X$ and $\U\V^T$ approximately match if $\U,\V$ are optimal to \eqref{eq:nmf}.

\begin{corollary}\label{thm:shift-one-sided}
Let $\X \in \R_+^{m \times n}$ be a nonnegative matrix and $\A \in \R^{k \times m}$ is a matrix with orthogonal rows. Let $\U_0 \in \R_+^{m \times r}$, $\V_0 \in \R_+^{r \times n}$ give a solution to the original NMF problem \eqref{eq:nmf} and $\X_0 = \U_0\V_0^T$. Additionally assume that $\left\|\mathbf{1}^T (\X_0 - \X)/\mathbf{1}^T \X \right\| \leq \eps < 1$, where $/$ denotes element-wise division and $\mathbf{1} \in \R^m$ is the vector of all ones. If $\lambda,\sigma > 0,$
\begin{equation}\label{eq:shifted-orth}
\tilde{\U},\tilde{\V} \revtwo{\; \in \;}  \argmin_{\U,\V \geq 0} \left \| \A \left( \X- \U\V^T \right) \right \|^2_F +  \lambda \|P_{\A^T}^\perp \U\V^T\|^2_F + \sigma \| \mathbf{1}^T ( \X - \U\V^T) \|^2,
\end{equation}
then 
$\tilde{\X} := \tilde{\U}\tilde{\V}^T$ satisfies 
$$\frac{\|\X - \tilde{\X}\|^2_F}{\|\X\|^2_F} \le c_\lambda \left[ \frac{\| \X - \X_0\|^2_F}{\|\X\|^2_F} + \frac{\|\X- P_{\A^T} \X\|^2_F}{\|\X\|^2_F} + \varepsilon^2\right].$$
For example, if we have that $\lambda, \eps \in (0, 1/2)$ one can bound $\revised{c_\lambda} = \max\{6, 8/\lambda\}.$
\end{corollary}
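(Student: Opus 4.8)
The plan is to reuse the argument of Theorem~\ref{thm:oneside_penalty} almost verbatim, treating the shift term $\sigma\|\mathbf{1}^T(\X-\U\V^T)\|^2$ as an extra nonnegative contribution that I can discard on the optimal side and must control on the comparison side. Since $\U_0,\V_0$ are feasible for the $\argmin$ in \eqref{eq:shifted-orth}, optimality of $\tilde{\U},\tilde{\V}$ gives
\begin{equation*}
\|\A(\X-\tilde{\X})\|_F^2 + \lambda\|P_\A^\perp\tilde{\X}\|_F^2 + \sigma\|\mathbf{1}^T(\X-\tilde{\X})\|^2 \le \|\A(\X-\X_0)\|_F^2 + \lambda\|P_\A^\perp\X_0\|_F^2 + \sigma\|\mathbf{1}^T(\X-\X_0)\|^2.
\end{equation*}
Dropping the nonnegative term $\sigma\|\mathbf{1}^T(\X-\tilde{\X})\|^2$ on the left yields exactly the feasibility inequality \eqref{feas} used in the proof of Theorem~\ref{thm:oneside_penalty}, but with the single additional summand $\sigma\|\mathbf{1}^T(\X-\X_0)\|^2$ on the right.

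From here I would run the identical chain of estimates as in Theorem~\ref{thm:oneside_penalty}: apply the Pythagorean bound \eqref{eq:triangle} to $\X,\tilde{\X}$, use $\|\A\M\|_F=\|P_\A\M\|_F$ (valid since $\A\A^T=\I$), substitute the modified feasibility inequality, and then invoke \eqref{eq:triangle} once more for $\X,\X_0$. The only change is that the extra term is carried through, producing a bound of the shape
\begin{equation*}
\|\X-\tilde{\X}\|_F^2 \le c_1 c_2\|\X-\X_0\|_F^2 + (2\lambda c_1+2)\|P_\A^\perp\X\|_F^2 + c_1\sigma\|\mathbf{1}^T(\X-\X_0)\|^2,
\end{equation*}
with $c_1=\max(2/\lambda,1)$ and $c_2=\max(2\lambda,1)$ exactly as before. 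Thus the whole corollary reduces to showing that the last term is at most a constant multiple of $\eps^2\|\X\|_F^2$.

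The key (and only genuinely new) step is to convert the column-sum hypothesis $\|\mathbf{1}^T(\X_0-\X)/\mathbf{1}^T\X\|\le\eps$ into such a bound. I would use the elementary observation that a bound on the Euclidean norm of a vector controls each coordinate: writing $[\mathbf{1}^T(\X-\X_0)]_j=\theta_j[\mathbf{1}^T\X]_j$, the hypothesis reads $\sum_j\theta_j^2\le\eps^2$, hence $\theta_j^2\le\eps^2$ for every $j$, so
\begin{equation*}
\|\mathbf{1}^T(\X-\X_0)\|^2=\sum_j\theta_j^2\,[\mathbf{1}^T\X]_j^2\le\eps^2\,\|\mathbf{1}^T\X\|^2.
\end{equation*}
The remaining issue, which I expect to be the main bookkeeping obstacle, is the comparison between $\|\mathbf{1}^T\X\|^2$ and $\|\X\|_F^2$: the product $\sigma\|\mathbf{1}^T\X\|^2$ must be absorbed into the $\eps^2\|\X\|_F^2$ term of the target bound with a constant that does not blow up with $\sigma$, so the admissible magnitude of $\sigma$ (and the implicit normalization of $\mathbf{1}$) is exactly where care is needed. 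A cleaner alternative that sidesteps this comparison is to borrow the restriction trick of Corollary~\ref{cor:shifted-two-sided}: compare instead against the column-rescaled matrix $\X_0'=\X_0\,\diag([\mathbf{1}^T\X]_j/[\mathbf{1}^T\X_0]_j)$, which remains nonnegative and of rank $r$ and makes the shift term vanish identically, at the cost of bounding $\|\X-\X_0'\|_F$ in terms of $\|\X-\X_0\|_F$ and $\eps$. Either way, once the shift contribution is absorbed, the constants inflate mildly from the Theorem~\ref{thm:oneside_penalty} value to $c=\max\{6,8/\lambda\}$ in the regime $\lambda,\eps\in(0,1/2)$.
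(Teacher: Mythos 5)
Your primary route has a genuine gap that you yourself flag but do not resolve: after carrying the shift term through, you are left with $c_1\sigma\|\mathbf{1}^T(\X-\X_0)\|^2$ on the right-hand side, and the hypothesis only converts this into $c_1\sigma\,\eps^2\|\mathbf{1}^T\X\|^2$. Since $\X$ is nonnegative, $\|\mathbf{1}^T\X\|^2$ can be as large as $m\|\X\|_F^2$, so the resulting ``constant'' is of order $\sigma m/\lambda$. But the corollary asserts a bound with $c_\lambda$ depending only on $\lambda$ (e.g.\ $\max\{6,8/\lambda\}$), while $\sigma>0$ is allowed to be arbitrary --- indeed, in the algorithmic corollaries $\sigma$ is deliberately taken large enough to cancel the negative entries of $\A^T\A$. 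So the first route cannot yield the stated result; there is no choice of bookkeeping that removes the $\sigma$-dependence once the shift term is estimated crudely on the comparison side.

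The ``cleaner alternative'' you mention in passing is precisely the paper's proof, and it is the step you should commit to. The paper compares against $\X_0\D$ with $\D=\diag(\mathbf{1}^T\X/\mathbf{1}^T\X_0)$, which is still a feasible nonnegative rank-$r$ product ($\X_0\D=\U_0(\D\V_0)^T$ with $\D\V_0\ge0$) and satisfies $\mathbf{1}^T\X_0\D=\mathbf{1}^T\X$, so the $\sigma$-term vanishes identically on the comparison side and $\sigma$ never enters the constants. The chain of estimates from Theorem~\ref{thm:oneside_penalty} then runs with $\X_0\D$ in place of $\X_0$, and the hypothesis is used only at the end to control $\|\X_0(\I-\D)\|_F\le\|\X_0\|_F\|\I-\D\|$ via $\|\I-\D\|\le\eps/(1-\eps)$, followed by $\|\X_0\|_F^2\le2\|\X-\X_0\|_F^2+2\|\X\|_F^2$. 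With that substitution your outline matches the paper's argument and the constants come out as claimed.
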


\begin{proof}
Let $\D \in \R^{n \times n}$ be the diagonal matrix with nonzero entries given by $(\mathbf{1}^T\X/\mathbf{1}^T\X_0)$, and so $\mathbf{1}^T \X_0\D = \mathbf{1}^T \X$. Note that $\X_0\D$ is some (not necessarily optimal) solution to \eqref{eq:shifted-orth} and so we have
\begin{align}\label{eq:shifted-optimality}
\| P_{\A^T}(\X-\tilde{\X})\|^2_F + \lambda \|P_{\A^T}^\perp\tilde{\X}\|^2_F
&\leq \| P_{\A^T}(\X-\tilde{\X})\|^2_F + \lambda \|P_{\A^T}^\perp\tilde{\X}\|^2_F + \sigma \| \mathbf{1}^T ( \X - \tilde{\X}) \|^2 \nonumber\\
&\leq \| P_{\A^T}(\X-\X_0\D)\|^2_F + \lambda \|P_{\A^T}^\perp\X_0\D\|^2_F.
\end{align}
Then, for $c_1 = \max(2/\lambda, 1),$
\begin{align*}
\|\X - \tilde{\X}\|^2_F 
&\overset{\eqref{eq:triangle}}{\leq} \|P_{\A^T}(\X - \tilde{\X})\|^2_F + 2\|P_{\A^T}^\perp \tilde{\X} \|^2_F + 2\|P_{\A^T}^\perp \X \|^2_F\\
&\leq c_1\left(\|P_{\A^T}(\X - \tilde{\X})\|^2_F + \lambda\|P_{\A^T}^\perp \tilde{\X} \|^2_F\right) + 2\|P_{\A^T}^\perp \X \|^2_F\\
&\overset{\eqref{eq:shifted-optimality}}{\leq} c_1 \left( \|P_{\A^T}(\X-\X_0\D)\|^2_F +  \lambda \|P_{\A^T}^\perp \X_0\D\|^2_F \right) + 2\|P_{\A^T}^\perp \X \|^2_F\\
&\leq  c_1\|P_{\A^T}(\X-\X_0\D)\|^2_F +  2\lambda c_1 \|P_{\A^T}^\perp (\X - \X_0\D)\|^2_F + (2\lambda c_1 + 2)\|P_{\A^T}^\perp \X \|^2_F \\
&\overset{c_2 = \max(2\lambda,1)}{\leq}  c_1c_2 \|\X - \X_0\D\|^2_F + (2\lambda c_1 + 2) \|P_{\A^T}^\perp \X \|^2_F\\
&\leq 2c_1c_2 \|\X - \X_0\|^2_F + 2c_1c_2 \|\X_0\|^2_F \|\I-\D \|^2 + (2\lambda c_1 + 2) \|P_{\A^T}^\perp \X \|^2_F\\
& \leq 2c_1c_2(1 +2 \varepsilon^2) \|\X - \X_0\|^2_F + 4c_1c_2\varepsilon^2  \|\X\|^2_F + (2\lambda c_1 + 2) \|P_{\A^T}^\perp \X \|^2_F.
\end{align*}
This completes the proof of Corollary~\ref{thm:shift-one-sided}.
\end{proof}

\revised{
\begin{remark}
    We note that the statement of Corollary~\ref{thm:shift-one-sided} holds with any $k,r \ge 1$. However, in general, $k$ needs to upper-bound $r$ for the error terms to be controllable, in the same way as discussed in Theorem~\ref{thm:tropp} above.
\end{remark}}

\section{Methods that solve compressed problems}\label{sec:solving}

In this section we define iterative methods that solve the formulated optimization problems directly, without referring to the original data or any matrices of the large uncompressed size.

\subsection{General convergence for sketched multiplicative updates}

Multiplicative updates \revtwo{(MU)} has been one of the most popular algorithms for \revtwo{the standard NMF problem \eqref{eq:nmf}} since the introduction in \cite{lee2000algorithms}. 
In this section we show how to modify the classical \revtwo{MU} algorithm for the various objectives we have derived in earlier sections. 

To this end, \revtwo{we first prove a general Theorem~\ref{thm: MU} for \revtwo{MU} for multiple minimization terms (instead of just one as in \cite{lee2000algorithms}) after possible additional linear transformations on the left and on the right \eqref{eq:gen-obj-mu}. Then -- in Corollaries~\ref{cor:solving-two-sided}, \ref{cor:solving-one-sided}, \ref{cor:solving apx orthogonal} -- we demonstrate several special cases of this generalized multiplicative updates scheme that extend the applicability of the MU framework specifically to our proposed compressed NMF problems, as additional linear transformations allow to represent projection and shift terms.} 

\begin{theorem}\label{thm: MU} Consider an objective function in the generic form 
\begin{equation}\label{eq:gen-obj-mu}
\argmin_{\U, \V \ge 0}\frac{1}{2}\sum_{i=1}^s \| \A_i (\X^{(\A)}_i - \U\V^T) \|_F^2 + \frac{1}{2}\sum_{j=1}^t \| (\X^{(\B)}_j - \U\V^T) \B_j \|_F^2,
\end{equation}
where 
all $\X_i^{(\A)}, \X_j^{(\B)} \in \R^{m \times n}$, $\A_i \in \R^{k \times m}$, $\B_j \in \R^{n \times k}$ and $(\U, \V \ge 0)$ means $(\U \in \R_+^{m \times r}, \V \in \R_+^{n \times r}).$ 
If all six matrices 
\begin{align}\label{eq:nonneg-sums}\left\{\U, \V, (\sum_{i=1}^s \A_i^T \A_i), (\sum_{j=1}^t \B_j \B_j^T), (\sum_{i=1}^s \A_i^T \A_i\X^{(\A)}_i), (\sum_{j=1}^t \X^{(\B)}_j\B_j \B_j^T) \right\}
\end{align}
are entry-wise nonnegative, then the objective \eqref{eq:gen-obj-mu}
is nonincreasing under the updates
\begin{align*}
\U &\leftarrow \U \circ \frac{\sum_{i=1}^s \A_i^T \A_i \X^{(\A)}_i \V + \sum_{j=1}^t \X^{(\B)}_j \B_j \B_j^T \V}{\sum_{i=1}^s \A_i^T \A_i \U\V^T  \V + \sum_{j=1}^t \U\V^T \B_j \B_j^T \V},\\
\V &\leftarrow \V \circ \frac{\sum_{i=1}^s (\X^{(\A)}_i)^T \A_i^T \A_i \U + \sum_{j=1}^t \B_j \B_j^T (\X^{(\B)}_j)^T \U}{\sum_{i=1}^s \V \U^T \A_i^T \A_i \U + \sum_{j=1}^t \B_j \B_j^T \V \U^T \U}.
\end{align*}

\end{theorem}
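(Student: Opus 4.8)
The plan is to use the majorization--minimization (auxiliary function) technique of Lee and Seung, adapted to this aggregated sketched objective. Since \eqref{eq:gen-obj-mu} is separately quadratic in $\U$ and in $\V$, and the two update rules are symmetric under transposition (interchanging the roles of the $\A_i$-terms and $\B_j$-terms), I would prove that each block update does not increase the objective and treat only the $\U$-update in detail. First I would fix $\V$ and compute the gradient of \eqref{eq:gen-obj-mu} in $\U$. Writing $\W := \sum_i \A_i^T \A_i$, $\Z := \sum_j \B_j \B_j^T$, $\mathbf{G} := \sum_i \A_i^T \A_i \X^{(\A)}_i$, and $\mathbf{H} := \sum_j \X^{(\B)}_j \B_j \B_j^T$, the gradient is
\[
\nabla_\U = -(\mathbf{G} + \mathbf{H})\V + \W \U (\V^T \V) + \U (\V^T \Z \V),
\]
so the positive part $\W\U\V^T\V + \U\V^T\Z\V$ is exactly the denominator and the negative part $(\mathbf{G}+\mathbf{H})\V$ the numerator of the stated $\U$-update.

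The first thing to verify is that numerator and denominator are entrywise nonnegative, which is precisely where the six hypotheses in \eqref{eq:nonneg-sums} enter. Here I would note that $\V^T\V$ and $\V^T\Z\V$ are entrywise nonnegative because $\V \ge 0$ and $\Z \ge 0$ (a product of entrywise nonnegative matrices is entrywise nonnegative), and similarly $\U \ge 0$, $\W \ge 0$. Consequently $\W\U\V^T\V$, $\U\V^T\Z\V$, $\mathbf{G}\V$, and $\mathbf{H}\V$ are all nonnegative. The key point is that only the sums $\W, \Z, \mathbf{G}, \mathbf{H}$ need be nonnegative, not the individual summands; this is what makes the hypotheses realistic for sketches where each $\A_i^T\A_i$ may have negative entries.

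The central step is the Lee--Seung majorization lemma: for symmetric entrywise nonnegative matrices $\mathbf{L}, \mathbf{R}$ and a fixed iterate $\U^t$ with positive entries,
\[
\Tr(\U^T \mathbf{L} \U \mathbf{R}) \le \sum_{i,a} \frac{(\mathbf{L}\U^t\mathbf{R})_{ia}}{\U^t_{ia}} \U_{ia}^2,
\]
with equality at $\U = \U^t$. I would prove this by expanding the trace as $\sum_{i,j,a,b} \mathbf{L}_{ij} \mathbf{R}_{ab} \U_{ia} \U_{jb}$ and applying the weighted arithmetic--geometric inequality $\U_{ia}\U_{jb} \le \tfrac{1}{2}\U^t_{ia}\U^t_{jb}\big(\U_{ia}^2/(\U^t_{ia})^2 + \U_{jb}^2/(\U^t_{jb})^2\big)$ termwise, where the nonnegativity of $\mathbf{L}_{ij}\mathbf{R}_{ab}$ preserves the inequality under summation and the symmetry of $\mathbf{L},\mathbf{R}$ folds the two resulting sums into $(\mathbf{L}\U^t\mathbf{R})_{ia}$. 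Applying this with $(\mathbf{L},\mathbf{R}) = (\W, \V^T\V)$ and with $(\mathbf{L},\mathbf{R}) = (\I, \V^T\Z\V)$ majorizes the two quadratic pieces, while the linear term $-\Tr(\U^T(\mathbf{G}+\mathbf{H})\V)$ is kept exactly. Summing yields a separable auxiliary function $G(\U,\U^t) \ge f(\U)$ with $G(\U^t,\U^t) = f(\U^t)$, whose per-entry numerators combine to exactly $(\W\U^t\V^T\V + \U^t\V^T\Z\V)_{ia}$.

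Finally I would minimize $G(\cdot,\U^t)$, which decouples over the entries of $\U$: setting $\partial G / \partial \U_{ia} = 0$ gives
\[
\U_{ia} = \U^t_{ia} \, \frac{((\mathbf{G}+\mathbf{H})\V)_{ia}}{(\W\U^t\V^T\V + \U^t\V^T\Z\V)_{ia}},
\]
which is exactly the stated multiplicative update for $\U$. The chain $f(\U^{t+1}) \le G(\U^{t+1},\U^t) \le G(\U^t,\U^t) = f(\U^t)$ then gives the nonincrease, and the transpose-symmetric computation (using $(\I, \U^T\W\U)$ and $(\Z, \U^T\U)$ in the lemma) handles $\V$. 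I expect the main obstacle to be the bookkeeping in establishing and applying the majorization lemma to the combined two-term quadratic: one must check that $\V^T\Z\V$ (and $\U^T\W\U$ in the $\V$-update) inherits entrywise nonnegativity so the lemma applies to both pieces, and that the two separate majorizers add up to the single denominator appearing in the update rule rather than two mismatched ones.
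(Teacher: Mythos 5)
Your proposal is correct and follows essentially the same majorization--minimization route as the paper: the paper vectorizes $\U$ and builds a single quadratic auxiliary function with diagonal curvature $\K_\u = \diag\big((\W^T\W\u)/\u\big)$, verifying positive semidefiniteness of $\K_\u - \W^T\W$ via the identity $\v^T\M\v = \tfrac{1}{2}\sum_{ij}(\W^T\W)_{ij}\u_i\u_j(\v_i-\v_j)^2$, which is exactly the AM--GM computation underlying your trace-majorization lemma applied to the two quadratic pieces $(\W,\V^T\V)$ and $(\I,\V^T\Z\V)$ (whose Kronecker sum is precisely the paper's $\W^T\W$). The only difference is presentational (matrix-trace form versus vectorized Kronecker form), and your proof correctly isolates the same crucial point that only the aggregated matrices in \eqref{eq:nonneg-sums}, not the individual summands, need be entrywise nonnegative.
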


\begin{remark} (Implementation considerations)
Note that we can compute the matrix $\A_i^T \A_i \U \V^T \V$ by the multiplication order given by $\A_i^T ((\A_i \U) (\V^T \V))$.
This order never requires us to store a matrix of size larger than $\max(n,m) \times \max(r,k)$.
Similar procedures can be used for other terms appearing in the updates of Theorem~\ref{thm: MU}.
\end{remark}
The proof of the theorem follows the standard approach that justifies the validity of multiplicative updates algorithm for NMF problem (e.g., \cite{lee2000algorithms}), where nonnegativity assumption ensures its validity on the sketched problem. We remark that giving nonnegativity conditions on the sums \eqref{eq:nonneg-sums} (rather than the stronger conditions on the individual terms) matters so we can put realistic assumption on our regularization terms that include orthogonal projection operators (in Section~\ref{sec:mu-compressed-problems}).

\begin{proof}[Proof of Theorem~\ref{thm: MU}]
 Let us consider the step updating the matrix $\U \in \mathbb{R}^{m\times r}_+$. Let $\u \in \R^{mr}_+$ be the vectorization of $\U$ denoted as $\u = \text{vec}(\U)$.
Define 
\begin{align*}
    &\y^{(1)}_i = \text{vec}(\A_i\X^{(\A)}_i) ,\quad \W^{(1)}_i = (  \V \otimes 
 \A_i) \quad \text{ for } i = 1, \ldots, s;,\\
    &\y^{(2)}_j = \text{vec}(\X^{(B)}_j \B_j), \quad   \W^{(2)}_j = ( \B_j^T \V \otimes \I_{m \times m}) \quad \text{ for } j = 1, \ldots, t,
\end{align*}
where $\otimes$ denotes matrix Kronecker product.
Define $\y \in \R^{k(m+n)}$ to be all of the vectors $\y^{(1)}_i$ and $\y^{(2)}_j$ stacked together vertically. Similarly, $\W \in \R^{(kn + km) \times rm}$ is a stack of all of the matrices $\W^{(1)}_i$ and $\W^{(2)}_j$.

Using the mixed Kronecker matrix-vector product property $(\M_1 \otimes \M_2) \text{vec} (\U) = \text{vec}(\M_2\U\M_1^T)$ that holds for any appropriately sized matrices $\U, \M_1, \M_2$, we can rewrite the objective function \eqref{eq:gen-obj-mu} as
\[
F(\u) = \frac{1}{2} \| \y - \W \u\|^2.
\]

 \noindent\textbf{Step 1: Define quadratic majorizing function.} Consider the function
\begin{equation}\label{eq:g-def}
G(\u',\u) = F(\u) + (\u'-\u)^T \nabla F(\u) + \frac{1}{2}(\u'-\u)^T \K_\u (\u'-\u),
\end{equation}
where the matrix $\K_\u$ is a diagonal matrix with the diagonal $(\W^T \W \u) / \u$, recall that $/$ represents elementwise division.
We claim that $G$ \emph{majorizes} $F$, i.e., $G(\u,\u) = F(\u)$ and $G(\u',\u) \geq F(\u')$.
It is clear that $G(\u,\u) = F(\u)$. We can write
\[
G(\u',\u) - F(\u') = \frac{1}{2} (\u'-\u)^T (\K_\u-
\W^T \W) (\u'-\u)
\]
from the comparison of \eqref{eq:g-def} with the Taylor decomposition for the quadratic function $F(\u')$ at $\u$. So, to check that $G(\u',\u) \geq F(\u')$, it is sufficient to show that the matrix $\K_\u-\W^T \W$ is positive semidefinite. Equivalently, it is sufficient to show that 
\[
\M = (\K_u-\W^T \W) \circ \u\u^T
\]
is positive semidefinite. Indeed, for any vector $\z$ of the appropriate size, $\z^T\M\z = (\z\circ\u)^T(\K_\u - \W^T\W)(\z\circ\u)$, recall that $\circ$ defines element-wise product.

\bigskip

\noindent\textbf{Step 2: Matrix $\M$ is positive semidefinite.}
We will check positive semidefitness of the matrix $\M$ directly, Consider any $\v \in \R^{nk}$. Then
\begin{align*}
\v^T \M \v &= \sum_{ij} \v_i \M_{ij} \v_j \\
&= \sum_{ij} \u_i (\W^T\W)_{ij} \u_j \v_i^2 - \v_i \u_i (\W^T\W)_{ij} \u_j \v_j \\
&= \sum_{ij} (\W^T\W)_{ij} \u_i \u_j (0.5 (\v_i^2 +\v_j^2) - \v_i \v_j) \\
&= \frac{1}{2} \sum_{ij} (\W^T\W)_{ij} \u_i \u_j (\v_i - \v_j)^2.
\end{align*}
Now observe
\begin{align*}
\W^T\W &= \sum_i \W_i^T \W_i + \sum_j \W_j^T \W_j \\
&= \sum_i (\V \otimes \A_i)^T (\V \otimes \A_i) + \sum_j (\B_j^T\V \otimes \I)^T (\B_j^T\V \otimes \I) \\
&= \sum_i (\V^T  \V) \otimes (\A_i^T \A_i) + \sum_j (\V^T \B_j \B_j^T \V) \otimes \I\\
&= (\V^T  \V) \otimes \big( \sum_i \A_i^T \A_i \big) + \V^T \big(  \sum_j \B_j \B_j^T \big) \V \otimes \I.
\end{align*}
Thus, by the entry-wise nonnegativity of $\V$, $\sum_i \A_i^T \A_i$, and $ \sum_j \B_j \B_j^T$, the matrix $\W^T\W$ is also entrywise nonnegative. Since $\U$ is also nonnegative,
\[
\v^T \M \v = \frac{1}{2} \sum_{ij} (\W^T\W)_{ij} \u_i \u_j (\v_i - \v_j)^2 \geq 0.
\]
This shows that $\M$ is positive semidefinite, and therefore, $G$ majorizes $F$.

\bigskip

\noindent\textbf{Step 3: The updates minimize the majorizing function.}
To conclude, observe that
\begin{equation}\label{eq:min-g}
\arg \min_{\u'} G(\u',\u) = \u - \K_\u^{-1} \nabla F(\u) = \u \circ \frac{\W^T \y}{\W^T\W\u}.
\end{equation}
In matrix form, this corresponds exactly to the update for $\U$ given by
\[
\U \circ \frac{\sum_i \A_i^T \A_i \X^{(\A)}_i \V + \sum_j \X^{(\B)}_j \B_j \B_j^T \V}{\sum_i \A_i^T \A_i \U\V^T  \V + \sum_j  \U\V^T \B_j \B_j^T \V}.
\]
From majorization property, we have
\[
F\big(\u \circ \frac{\W^T \y}{\W^T\W\u}\big) \leq G\big(\u \circ \frac{\W^T\y}{\W^T\W\u}, \u\big) \overset{\eqref{eq:min-g}}{\leq} G(\u,\u) = F(\u)
\]
and thus, the iterates do not increase the objective.

The updates for $\V$ also do not increase the objective by a similar argument.
The conditions on  $(\sum_{i=1}^s \A_i^T \A_i\X_i)$ and $ (\sum_{j=1}^t \X_j\B_j \B_j^T)$ ensure that the matrices $\U$ and $\V$ are recursively nonnegative under the updates.
\end{proof}

\subsection{\revtwo{MU} for solving regularized compressed problems}\label{sec:mu-compressed-problems}
Now, we demonstrate how the general framework  \eqref{eq:gen-obj-mu} applies to the compressed problems from Section~\ref{sec:compressed-problems}.

\begin{corollary}[Two-sided updates]\label{cor:solving-two-sided} 
Let $\X \in \R_+^{m \times n}$ be a nonnegative matrix and $\A_1 \in \R^{k \times m}$, $\A_2 \in \R^{n \times k}$ are generic compression matrices such that $\A_1\X\A_2$ is invertible.  
 \revised{Consider} $\Q_1 \in \R^{m \times r}$ and $\Q_2 \in \R^{n \times r}$, the matrices which columns form orthonormal bases of the column space of $\X\A_2$ and the row space of $\A_1\X$ respectively. For any $\lambda_1,\lambda_2 \geq 0$, if
\begin{align*}
    \sigma_1 &\geq \max\big(\left(\A_1^T \A_1\right)_-,  \left(\A_1^T \A_1 + \lambda_1 (\I - \Q_1\Q_1^T)\right)_-\big), \\
    \sigma_2 &\geq \max\big(\left(\A_2 \A_2^T\right)_-,\left(\A_2 \A_2^T + \lambda_2 (\I - \Q_2\Q_2^T)\right)_-\big),
\end{align*}
where $\max(\M_1, \M_2)$ denotes maximum of the entries in all of the matrices $\M_i$, and $(\M)_-$ denotes negative part of the matrix, then the objective
\begin{align*}
\|\A_1 (\X- \U\V^T)\|^2_F &+ \|(\X-\U\V^T)\A_2\|^2_F \nonumber\\ &+ \lambda_1\|P_{\X\A_2}^\perp \U\V^T\|^2_F + \lambda_2\|\U\V^T P_{\X^T\A_1^T}^\perp\|^2_F  \\
 & \quad \quad \quad \quad + \sigma_1 \|\mathbf{1}_m^T (\X-\U\V^T)\|^2 + \sigma_2 \| (\X-\U\V^T)\mathbf{1}_n\|^2, \nonumber
\end{align*}

is nonincreasing under the updates
\begin{align*}
\U &\leftarrow \U \circ \frac{\A_{1,\sigma} \X \V + \X\A_{2,\sigma} \V }{ ( \A_{1,\sigma} + \lambda_1 (\I - \Q_1\Q_1^T)) \U\V^T \V + \U\V^T (\A_{2,\sigma} + \lambda_2 (\I - \Q_2\Q_2^T)) \V }\\
\V &\leftarrow \V \circ \frac{\X^T \A_{1,\sigma} \U + \A_{2,\sigma} \X^T \U}{\V \U^T (\A_{1,\sigma} + \lambda_1 (\I - \Q_1\Q_1^T)) \U + (\A_{2,\sigma} + \lambda_2 (\I - \Q_2\Q_2^T)) \V\U^T \U},\nonumber
\end{align*}
where 
$$
\A_{1,\sigma} := \A_1^T \A_1 + \sigma_1 \mathbf{1}_m\mathbf{1}_m^T  \; \text{ and } \; \A_{2,\sigma} := \A_2 \A_2^T +\sigma_2 \mathbf{1}_n\mathbf{1}_n^T.
$$
Note that Theorem~\ref{thm:exact-two-sided} and Corollary~\ref{cor:shifted-two-sided} claim that the optimal solution for \eqref{eq:comp-shift-nmf} is the optimal solution for the uncompressed NMF problem if $\X$ has exactly nonnegative decomposition of the rank at most $k$.
\end{corollary}

\begin{proof}
Consider the setting of Theorem~\ref{thm: MU}, with the matrices 
\begin{align*}
&\{\A_i\}_{i = 1,2,3} = \{\A_1, \sqrt{\sigma_1}\mathbf{1}_m^T, \sqrt{\lambda_1} P_{\X\A_2}^\perp\} = \{\A_1, \sqrt{\sigma_1}\mathbf{1}_m^T, \sqrt{\lambda_1} (\I - \Q_2\Q_2^T)\},  \\
&\{\B_j\}_{j = 1,2,3} = \{\A_2, \sqrt{\sigma_2} \mathbf{1}_n, \sqrt{\lambda_1} P_{\X^T\A_1^T}^\perp\} = \{\A_2, \sqrt{\sigma_2} \mathbf{1}_n, \sqrt{\lambda_1} (\I - \Q_1\Q_1^T)\}, \\
&\{\X^{(\A)}_i\}_{i = 1,2,3} = \{\X^{(\B)}_j\}_{j = 1,2,3}= \{\X, \X, \mathbf{0}\}. 
\end{align*}
Clearly, we have that the matrices $\X^{(\A)}_i$ and $\X^{(\B)}_i$ are nonnegative.
Then to apply Theorem~\ref{thm: MU}, we must check that $\sum_i \A_i^T \A_i$, $\sum_i \A_i^T \A_i \X^{(\A)}_i$, $\sum_j \B_j \B_j^T$ and $\sum_j \X^{(\B)}_j \B_j \B_j^T$ are entry-wise nonnegative.
First, we calculate
\begin{align*}
\sum_i \A_i^T \A_i &
= \A_1^T\A_1 + \sigma_1 \mathbf{1}_m\mathbf{1}_m^T + \lambda_1(\I - \Q_1\Q_1^T), \\
\sum_j \B_j \B_j^T &
= \A_2\A_2^T + \sigma_2 \mathbf{1}_n\mathbf{1}_n^T + \lambda_2 (\I - \Q_2\Q_2^T).
\end{align*}
Thus to ensure entry-wise nonnegativity of both sums, we need
\begin{align}\label{eq:sigma-1}
    &\sigma_1 \geq \max\big(\left(\A_1^T \A_1 + \lambda_1 (\I - \Q_1\Q_1^T)\right)_-\big), \\
    &\sigma_2 \geq \max\big(\left(\A_2 \A_2^T + \lambda_2 (\I - \Q_2\Q_2^T)\right)_-\big) \nonumber,
\end{align}
that is, $\sigma_1$ and $\sigma_2$ upper-bound maximum elements in the respective matrices.
Similarly, for 
\begin{align*}
\sum_i \A_i^T \A_i \X_i &= (\A_1^T\A_1 + \sigma_1 \mathbf{1}_m\mathbf{1}_m^T)\X, \\
\sum_j \X_j \B_j \B_j^T &= \X(\A_2\A_2^T + \sigma_2 \mathbf{1}_n\mathbf{1}_n^T).
\end{align*}
we need to ensure
\begin{equation}\label{eq:sigma-2}
    \sigma_1 \geq \max\big(\left(\A_1^T \A_1\right)_-\big) \; \text{ and } \; \sigma_2 \geq \max\big(\left(\A_2 \A_2^T\right)_-\big).
    \end{equation}
 With  $\sigma_1$ and $\sigma_2$ large enough to satisfy both \eqref{eq:sigma-1} and \eqref{eq:sigma-2}, we conclude the proof of Corollary~\ref{cor:solving-two-sided} as a special case of Theorem~\ref{thm: MU}.
\end{proof}

\begin{corollary}[One-sided updates for orthogonal $\A$] 
\label{cor:solving-one-sided} If $\X\in\R_+^{m \times n}$ and sketching matrix $\A \in \R^{k \times m}$ has orthogonal rows, $\lambda \in [0,1]$ and the nonnegativity correction term $\sigma \geq \max\big(\left(\A^T\A\right)_-\big)$, then the objective 
\begin{equation}\label{eq:comp-shift-nmf}
\|\A(\X-\U\V^T)\|^2_F +  \lambda \|P_{\A^T}^\perp \U\V^T\|^2_F + \sigma \|\mathbf{1}^T(\X- \U\V^T)\|^2
\end{equation}
with respect to the variables $\U \in \R_+^{m \times r}, \V \in \R_+^{n \times r}$ 
 is nonincreasing under the updates
\begin{align}\label{eq:one-sided-mu-updates}
\U &\leftarrow \U \circ \frac{\A^T\A\X\V + \sigma \mathbf{1}\mathbf{1}^T\X\V}{(1-\lambda)\A^T\A\U\V^T\V + \sigma \mathbf{1}\mathbf{1}^T\U\V^T\V + \lambda\U\V^T\V} \\
\V &\leftarrow \V \circ \frac{\X^T\A^T\A\U + \sigma \X^T\mathbf{1}\mathbf{1}^T\U}{(1-\lambda)\V\U^T\A^T\A\U + \sigma \V\U^T\mathbf{1}\mathbf{1}^T\U + \lambda\V\U^T\U}. \nonumber
\end{align}
Here,  $\mathbf{1} = (1, \ldots, 1) \in \R^m$. Note that Corollary~\ref{thm:shift-one-sided} claims that the optimal solution for \eqref{eq:comp-shift-nmf} results in a good solution for the uncompressed NMF problem as long as the original NMF error $\min\limits_{\U,\V \ge 0}\| \X - \U \V\|^2_F$ and $\|P^\perp_{\A^T} \X\|^2_F$ term are small.
\end{corollary}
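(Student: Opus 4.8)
The plan is to invoke Theorem~\ref{thm: MU} with a suitable choice of the matrices $\A_i$ and $\B_j$ so that the objective \eqref{eq:comp-shift-nmf} matches the generic form \eqref{eq:gen-obj-mu}, and then verify the six nonnegativity conditions \eqref{eq:nonneg-sums}. Since the regularizer $\lambda\|P_\A^\perp\U\V^T\|_F^2$ has no $\X$-data in it, we set its associated data matrix to $\mathbf{0}$; the shift term $\sigma\|\mathbf{1}^T(\X-\U\V^T)\|^2$ is handled by introducing the rank-one ``measurement'' $\sqrt{\sigma}\,\mathbf{1}^T$ acting on $\X$. Concretely I would take (on the left-sketching side, $\B_j$ absent) the three terms
\[
\{\A_i\}_{i=1,2,3} = \{\A,\ \sqrt{\sigma}\,\mathbf{1}^T,\ \sqrt{\lambda}\,P_\A^\perp\}, \qquad \{\X^{(\A)}_i\}_{i=1,2,3} = \{\X,\ \X,\ \mathbf{0}\},
\]
mirroring the bookkeeping already carried out in the proof of Corollary~\ref{cor:solving-two-sided}.

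The key algebraic step is to compute $\sum_i \A_i^T\A_i$ and $\sum_i \A_i^T\A_i\X^{(\A)}_i$ and simplify using $\A\A^T=\I$, so that $P_\A^\perp = \I - \A^T\A$ and $(P_\A^\perp)^T P_\A^\perp = P_\A^\perp$ since it is an orthogonal projection. First I would compute
\[
\sum_i \A_i^T\A_i = \A^T\A + \sigma\,\mathbf{1}\mathbf{1}^T + \lambda(\I - \A^T\A) = (1-\lambda)\A^T\A + \sigma\,\mathbf{1}\mathbf{1}^T + \lambda\I,
\]
which explains the three groups of terms appearing in the denominators of \eqref{eq:one-sided-mu-updates}: the $(1-\lambda)\A^T\A\U\V^T\V$, the $\sigma\,\mathbf{1}\mathbf{1}^T\U\V^T\V$, and the $\lambda\U\V^T\V$ summands. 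Because the third data matrix is $\mathbf{0}$, the projection term drops out of the numerator, and I would compute
\[
\sum_i \A_i^T\A_i\X^{(\A)}_i = (\A^T\A + \sigma\,\mathbf{1}\mathbf{1}^T)\X,
\]
matching the numerators $\A^T\A\X\V + \sigma\,\mathbf{1}\mathbf{1}^T\X\V$ and the corresponding transpose expression for the $\V$-update.

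Next I would verify the four nonnegativity requirements on these sums. The numerator-data sum $(\A^T\A+\sigma\mathbf{1}\mathbf{1}^T)\X$ is entrywise nonnegative provided $\A^T\A + \sigma\mathbf{1}\mathbf{1}^T \ge 0$ entrywise, which holds once $\sigma \ge \max\{(\A^T\A)_-\}$ since $\mathbf{1}\mathbf{1}^T$ is the all-ones matrix and $\X\ge 0$. For the coefficient sum $(1-\lambda)\A^T\A + \sigma\mathbf{1}\mathbf{1}^T + \lambda\I$, the diagonal shift $\lambda\I\ge 0$ and the nonnegative off-diagonal contribution only help, so the worst entries are again controlled by $\sigma\mathbf{1}\mathbf{1}^T$ dominating the negative part of $(1-\lambda)\A^T\A$; using $\lambda\in[0,1]$ so that $(1-\lambda)\ge 0$, the same threshold $\sigma\ge\max\{(\A^T\A)_-\}$ suffices. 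The remaining two conditions (on $\U$ and $\V$) hold by hypothesis and are preserved recursively by the multiplicative form of the updates. With all six conditions \eqref{eq:nonneg-sums} confirmed, Theorem~\ref{thm: MU} applies verbatim and yields exactly the updates \eqref{eq:one-sided-mu-updates}, completing the proof.

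I expect the only genuine subtlety to be the sign analysis of the coefficient matrix $(1-\lambda)\A^T\A+\sigma\mathbf{1}\mathbf{1}^T+\lambda\I$: one must confirm that the stated threshold $\sigma\ge\max\{(\A^T\A)_-\}$, rather than a threshold involving $(1-\lambda)$ or $\lambda$, is truly enough. The point is that scaling $\A^T\A$ by $(1-\lambda)\le 1$ can only shrink the magnitude of its negative entries, and adding $\lambda\I$ affects only the (already nonnegative) diagonal, so the bound needed for the unscaled $\A^T\A$ remains sufficient. Everything else is the same Kronecker-product and term-matching bookkeeping already validated in Theorem~\ref{thm: MU} and Corollary~\ref{cor:solving-two-sided}.
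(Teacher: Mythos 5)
Your proposal is correct and follows essentially the same route as the paper: the same instantiation of Theorem~\ref{thm: MU} with $\{\A_i\} = \{\A,\ \sqrt{\sigma}\,\mathbf{1}^T,\ \sqrt{\lambda}\,P_\A^\perp\}$ and data matrices $\{\X,\X,\mathbf{0}\}$ (the paper lists them in a different order), the same simplification $(P_\A^\perp)^TP_\A^\perp = P_\A^\perp$ using $\A\A^T=\I$, and the same nonnegativity check. Your extra remark about why $\sigma \ge \max\{(\A^T\A)_-\}$ suffices despite the $(1-\lambda)$ scaling and the $\lambda\I$ diagonal shift is a slightly more explicit version of what the paper asserts without comment.
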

\begin{proof}
In the language of Theorem~\ref{thm: MU}, ``$\X_i$" matrices are $\{\X, \mathbf{0}, \X\}$ and  ``$\A_i$'' matrices are $\{\A, \sqrt{\lambda}(\I - \A^T\A), \sqrt{\sigma} \mathbf{1}^T\}$ for $i = 1,2,3$ respectively.
One can see that
$$
\sum_{i = 1}^3 \A_i^T \A_i 
= (1-\lambda) \A^T\A + \lambda \I + \sigma \mathbf{1}\mathbf{1}^T
 \; \text{ and }
\; 
\sum_{i = 1}^3 \A_i^T \A_i \X_i = \A^T \A\X + \sigma \mathbf{1}\mathbf{1}^T\X.$$
These matrices are entry-wise 
nonnegative if $\sigma \geq \max\big(\left(\A^T\A\right)_-\big)$ and $\lambda \in [0,1]$. Theorem~\ref{thm: MU} applies to justify that the objective \eqref{eq:comp-shift-nmf}  is nonincreasing under the updates \eqref{eq:one-sided-mu-updates}. 
\end{proof}

The proof of the next corollary for not necessarily orthogonal sketching matrices is similar to the one above and is omitted for brevity.

\begin{corollary}[One-sided updates for nonorthogonal sketching matrices]\label{cor:solving apx orthogonal} If $\X\in\R_+^{m \times n}$ and $\A \in \R^{k \times m}$ is an arbitrary matrix, $\lambda \in [0,1]$ and the nonnegativity correction term $\sigma \geq \max((\A^T\A)_-)$, then the objective 
\[
\|\A(\X-\U\V^T)\|^2_F +  \lambda \|\U\V^T\|^2_F + \sigma \|\mathbf{1}^T(\X- \U\V^T)\|^2
\]
with respect to the variables $\U \in \R_+^{m \times r}, \V \in \R_+^{n \times r}$
is nonincreasing under the updates
\begin{align*}
\U &\leftarrow \U \circ \frac{\A^T\A\X\V + \sigma \mathbf{1}\mathbf{1}^T\X\V}{\A^T\A\U\V^T\V + \sigma \mathbf{1}\mathbf{1}^T\U\V^T\V + \lambda\U\V^T\V} \\
\V &\leftarrow \V \circ \frac{\X^T\A^T\A\U + \sigma \X^T\mathbf{1}\mathbf{1}^T\U}{\V\U^T\A^T\A\U + \sigma \V\U^T\mathbf{1}\mathbf{1}^T\U + \lambda\V\U^T\U}.
\end{align*}
Here,  $\mathbf{1}$ is a vector of all ones in $\R^m$. 

\end{corollary}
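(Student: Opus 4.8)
The plan is to recognize the stated objective as a special instance of the generic sketched objective \eqref{eq:gen-obj-mu} and then apply Theorem~\ref{thm: MU}, following the same template as the proof of Corollary~\ref{cor:solving-one-sided}. Since every summand is of the left-sketched form $\|\A_i(\X^{(\A)}_i - \U\V^T)\|_F^2$, I take $t = 0$ and $s = 3$ in \eqref{eq:gen-obj-mu}, with
\[
\{\A_i\}_{i=1,2,3} = \{\A,\ \sqrt{\lambda}\,\I,\ \sqrt{\sigma}\,\mathbf{1}^T\}, \qquad \{\X^{(\A)}_i\}_{i=1,2,3} = \{\X,\ \mathbf{0},\ \X\}.
\]
Indeed $\|\sqrt{\lambda}\,\I(\mathbf{0} - \U\V^T)\|_F^2 = \lambda\|\U\V^T\|_F^2$ and $\|\sqrt{\sigma}\,\mathbf{1}^T(\X - \U\V^T)\|_F^2 = \sigma\|\mathbf{1}^T(\X - \U\V^T)\|^2$, so the objective equals $2\times$ the form \eqref{eq:gen-obj-mu}; the overall constant is irrelevant both for monotonicity and because the updates of Theorem~\ref{thm: MU} are scale-invariant ratios. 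The only change from Corollary~\ref{cor:solving-one-sided} is that the simplified regularizer $\lambda\|\U\V^T\|_F^2$ is now represented by $\A_2 = \sqrt{\lambda}\,\I$ in place of $\sqrt{\lambda}\,(\I - \A^T\A)$; as in that proof I use the fact that Theorem~\ref{thm: MU} only ever involves the $m \times m$ Gram matrices $\A_i^T\A_i$, so the differing row-counts of $\A$, $\sqrt{\lambda}\,\I$, and $\sqrt{\sigma}\,\mathbf{1}^T$ cause no difficulty.

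Next I verify the nonnegativity hypotheses \eqref{eq:nonneg-sums}. Since $t=0$, the two sums over $j$ are empty and vacuously nonnegative, and $\U \ge 0$, $\V \ge 0$ hold by assumption, leaving two conditions. For these I compute
\[
\sum_{i=1}^3 \A_i^T\A_i = \A^T\A + \lambda\I + \sigma\,\mathbf{1}\mathbf{1}^T, \qquad \sum_{i=1}^3 \A_i^T\A_i\X^{(\A)}_i = \left(\A^T\A + \sigma\,\mathbf{1}\mathbf{1}^T\right)\X,
\]
where the $\lambda$-term drops out of the second sum because $\X^{(\A)}_2 = \mathbf{0}$. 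The diagonal of $\A^T\A$ is nonnegative, as are $\lambda\I$ and the diagonal of $\sigma\,\mathbf{1}\mathbf{1}^T$, while each off-diagonal entry equals $(\A^T\A)_{ij} + \sigma$, which is nonnegative precisely when $\sigma \ge \max\{(\A^T\A)_-\}$; hence the first sum is entry-wise nonnegative. The matrix $\A^T\A + \sigma\,\mathbf{1}\mathbf{1}^T$ is nonnegative by the same argument, and multiplying it on the right by the nonnegative $\X$ keeps the second sum nonnegative.

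It is worth noting that this verification is in fact cleaner than in the orthogonal case: because $\A_2^T\A_2 = \lambda\I$ touches only the diagonal, the single condition $\sigma \ge \max\{(\A^T\A)_-\}$ suffices, and the upper bound $\lambda \le 1$ (needed in Corollary~\ref{cor:solving-one-sided} to control the $(1-\lambda)\A^T\A$ term produced by the projector) is never actually used here; I would retain the hypothesis $\lambda \in [0,1]$ only for consistency with the preceding statement. With the hypotheses in hand, Theorem~\ref{thm: MU} applies and gives that the objective is nonincreasing under
\[
\U \leftarrow \U \circ \frac{\left(\A^T\A + \sigma\,\mathbf{1}\mathbf{1}^T\right)\X\V}{\left(\A^T\A + \lambda\I + \sigma\,\mathbf{1}\mathbf{1}^T\right)\U\V^T\V},
\]
together with the analogous update for $\V$; expanding numerator and denominator reproduces exactly the stated updates. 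Because the argument is a direct specialization of Theorem~\ref{thm: MU}, I anticipate no genuine obstacle — the only point demanding a moment of care is the row-dimension mismatch among the $\A_i$, which is harmless for the reason given above.
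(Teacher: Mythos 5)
Your proposal is correct and is exactly the argument the paper intends: the paper omits this proof as ``similar to the one above,'' and your specialization of Theorem~\ref{thm: MU} with $\{\A_i\} = \{\A, \sqrt{\lambda}\,\I, \sqrt{\sigma}\,\mathbf{1}^T\}$ and $\{\X^{(\A)}_i\} = \{\X, \mathbf{0}, \X\}$ is the natural adaptation of the proof of Corollary~\ref{cor:solving-one-sided}, reproducing the stated updates. Your side remark that $\lambda \le 1$ is not actually needed here (unlike in the orthogonal case, where it controls the $(1-\lambda)\A^T\A$ term) is also accurate.
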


\revtwo{To conclude, we note that all the considered sketched MU updates result in the computational cost of order $(n+m)kr$, in the natural case when $k,r \ll n, m$. At the same time, full MU update takes order $nmr$ operations per iteration. In particular, in the case when $n \sim m$, the quadratic per-iteration cost with respect to the size of the data becomes linear. The iterations of the sketched algorithms are indeed much faster, but they  still could (and usually do) require more iterations until empirical convergence to a set threshold. In Appendix~A we demonstrate this tradeoff empirically. } 
\subsection{Solving compressed problems with projected gradient descent}\label{sec:gd} 
 Multiplicative updates is a popular approach to find nonnegative factorizations due to its simplicity and good convergence properties. 
 However, other standard methods such as alternating nonnegative least
squares, hierarchical least squares, projected gradient descent, among others, can be run on the compressed problems. For comparison and additional example, we will consider 
projected gradient descent (GD) method on compressed data.

For an arbitrary loss function $L(\U,\V)$, nonnegative projected GD can be defined
\begin{align*}
\U &\leftarrow (\U - \alpha \nabla_\U L(\U,\V))_+ \\
\V &\leftarrow (\V - \alpha \nabla_\V L(\U,\V))_+,
\end{align*}
where $\alpha$ is the step size.
For the sake of concreteness, we will give an example of the updates for one of our formulated objective functions.
The projected gradient descent updates for the objective 
$
\|\A(\X-\U\V^T)\|^2_F +  \lambda \|P_{\A^T}^\perp \U\V^T\|^2_F
$
(as in Theorem~\ref{thm:oneside_penalty}) are
\begin{align}\label{eq:proj-gd-updates}
\U &\leftarrow \left(\U - \alpha \left( (1-\lambda)\A^T\A\U\V^T\V + \U\V^T\V - \A^T\A\X\V \right) \right)_+ \\
\V &\leftarrow \left(\V - \alpha \left( (1-\lambda)\V\U^T\A^T\A\U + \V\U^T\U - \X^T\A^T\A\U \right) \right)_+ \nonumber.
\end{align}
We can similarly derive updates for our other objective functions.
A disadvantage of \revised{GD with a constant step size} is that it possesses no guarantee of convergence or even a nonincreasing property. 
\revtwo{We note that by choosing step sizes stepsize small enough, or adaptively, one can derive a gradient descent method with a nonincreasing guarantee, but for the sake of simplicity we do not consider this here.} 
Empirically, we see (Figure~\ref{fig:real-world-convergence} below) that on some datasets projected GD shows competitive performance and that its convergence is indeed not monotonic, unlike the convergence of the regularized compressed MU algorithm. \revtwo{We note that smaller step sizes in GD lead to slower convergence of the method, and the right trade-off can be dataset-dependent.}

\section{Experiments}\label{sec: experiments}
We experiment with three datasets coming from various domains. The 20 Newsgroups dataset (``20News") \cite{20news} is a a standard dataset for text classification and topic modeling tasks. It is a collection of articles divided into a total of $20$ subtopics of the general topics of religion, sales, science, sports, tech and politics. The Olivetti faces (``Faces") \cite{samaria1994parameterisation} is a standard image dataset containing grayscale
facial images. It is often used in the literature as an example for different factorization methods including NMF \cite{zhang2008topology, zhao2019deep}. Finally, we construct a synthetic dataset  with regular random data and nonnegative rank of exactly $20$. Specifically, we let $\U$ and $\V$ be $1000 \times 20$ matrices whose entries are distributed like standard lognormal random variables and define $\X = \U \V^T$. 
The dimensions of the datasets are reported in Table~\ref{table:datasets}.

\begin{table}[h]
\centering
\begin{tabular}{l|l|l}
Dataset   & $n$     & $m$         \\
\hline
Synthetic      & 1000  & 1000     \\
Faces \cite{samaria1994parameterisation} & 400   & 4096    \\
20News \cite{20news} & 11314 & 101322  
\end{tabular}
\caption{Dimensions of all datasets studied.}
\label{table:datasets}
\end{table}

All experiments were run on a cluster with 4 2.6 GHz Intel Skylake CPU cores and a NVIDIA V100 GPU.
Compressed methods were implemented using the JAX library to take advantage of GPU acceleration.
The uncompressed methods that we compare were not implemented to use GPU acceleration since in applications the full data matrix may be too large to store on a GPU.
In our case, the 20News data matrix was too large to store as a dense matrix on our GPU.

All algorithms which we test require initial values for the factor matrices $\U$ and $\V$.
We note that the choice of initialization can significantly influence the convergence and overall behavior of NMF algorithms.
To ensure fairness across experiments, we always initialize the matrices $\U$ and $\V$ randomly with all entries being independent standard log-normal random variables.
We also fix the same random seed across comparisons.

To measure the convergence of the methods, besides interpretability of the topics or images, we use the \emph{relative error metric} $\|\X - \U\V^T\|_F / \|\X\|_F$ and the scale invariant \emph{cosine similarity} metric (normalized dot product metric) between $\U\V^T$ and $\X$, defined as $\langle \X, \U\V^T\rangle/\|\X\|_F\|\U\V^T\|_F$. 

\subsection{Exact recovery from compressed measurements is achievable}

\begin{figure}
    \centering
    \includegraphics[width=.5\textwidth]{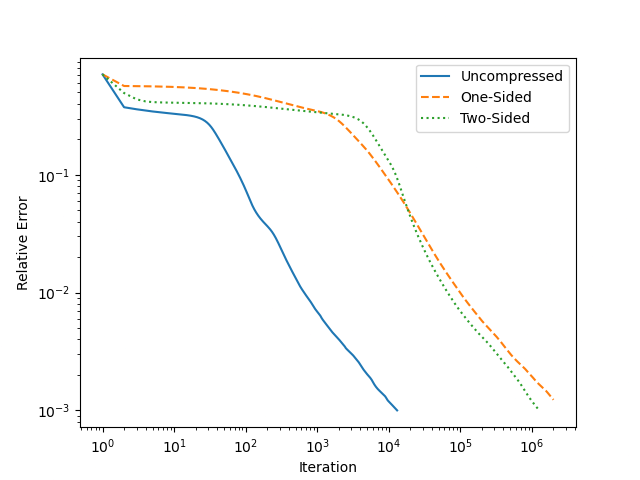}
    \caption{NMF recovery of the synthetic data with MU from full data (Uncompressed); from compressed data with one-sided data-adapted sketches using only $4\%$ of original memory (One-sided), and with two-sided  sketches using  $8\%$ of original memory (Two-sided). In both cases, i.i.d.\ Gaussian sketching matrices are used.} All three methods achieve recovery within $10^{-3}$ relative error.
    \label{fig:synthetic_comparison}
   
\end{figure}

To showcase the exact recovery, we focus on synthetic dataset that has an exact nonnegative factorization with $r = 20$.
In Figure~\ref{fig:synthetic_comparison}, we compare nonnegative factors found by the MU from full data $\X$, from the one-sided data-adapted sketches, and from the two-sided oblivious (i.i.d.\ Gaussian) linear measurements.
We take the target rank $r = 20$ and the sketch size $k = 20$ (so, the sketching matrices have the shape $20 \times 1000$). In this case, the matrix $\X$ has one million elements whereas the total number of elements in $\A$ and $\X\A$ combined is only forty thousand.
This represents a memory ratio of $4 \%$ for the one-sided method.
For the two sided method, the total number of elements in $\A_1$,$\A_2$,$\A_1\X$, and $\X\A_2$ is eighty thousand representing a memory ratio of $8 \%$.

We employ (compressed) MU algorithm as in Corollary~\ref{cor:solving-one-sided} with $\lambda = 0.1$ and Corollary~\ref{cor:solving-two-sided} with $\lambda = 0$. The parameter $\sigma$ is chosen minimal so that we have $\A\A^T \geq -\sigma$, and similarly for the $\A_1$, $\A_2$, $\sigma_1$ and $\sigma_2$ in the two-sided case. 
We see that the algorithm achieves a near exact fit, eventually reducing relative error below $10^{-3}$ which was our stopping criterion in this case.
The one-sided method and the oblivious two-sided methods seem to be converging at a similar rate as the uncompressed method, albeit after a ``burn-in'' phase.

\subsection{Effect of compression size $k$ on recovery}
In Figure~\ref{fig:compression}, we compare the compressed multiplicative updates for different amounts of compression on the same synthetic dataset: the target rank $r = 20$ and the sketch size $k$ varies. 
 \begin{figure}[t!]
        \subfloat[ $\A$ is a random Gaussian matrix]{%
            \includegraphics[width=.5\linewidth]{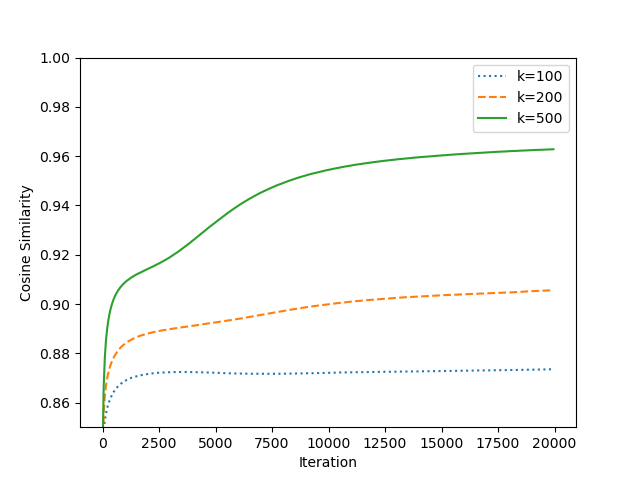}%
            \label{subfig:a}%
        }\hfill
        \subfloat[ $\A$ is a random matrix with orthogonal rows]{%
            \includegraphics[width=.5\linewidth]{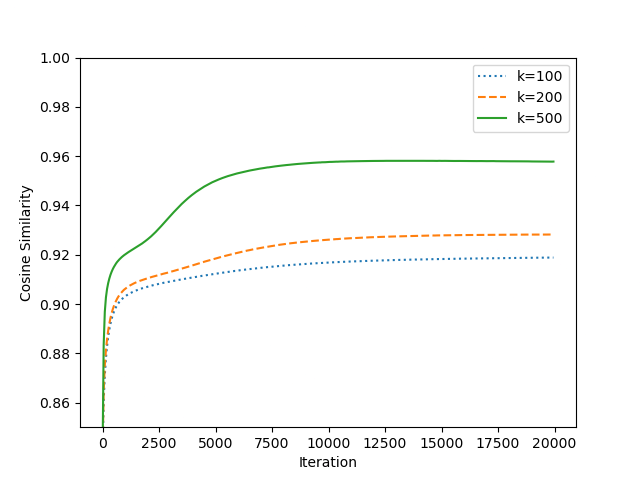}%
            \label{subfig:b}%
        }\\
        
        \vspace*{-.3cm}
        \hfill
        \subfloat[ $\A$ is a data-adapted  matrix]{%
            \includegraphics[width=.5\linewidth]{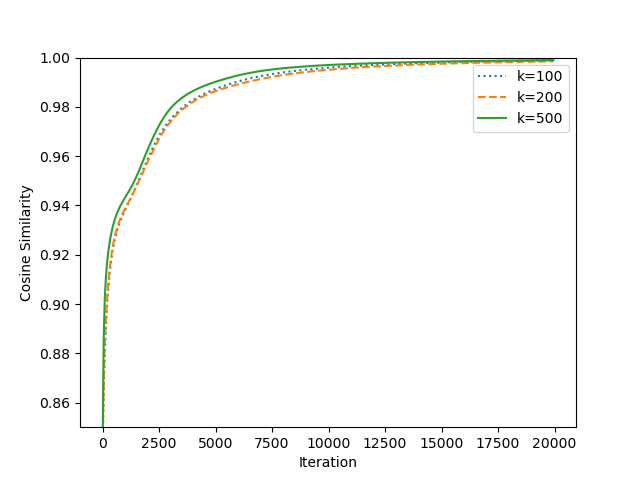}%
            \label{subfig:d}%
        }
           \subfloat[Two-sided compression with Gaussian sketches]{%
            \includegraphics[width=.5\linewidth]{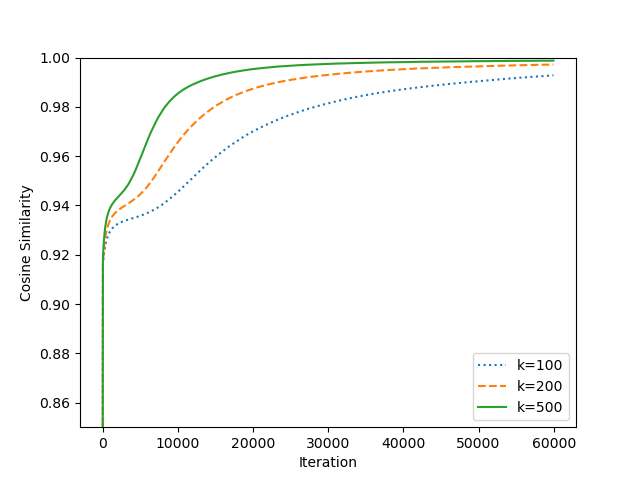}%
            \label{subfig:c}%
        }
        \caption{NMF recovery of the synthetic data with MU. Displays (c,d) show that MU on data-adapted and random two-sided sketched data also tend to the limiting similarity $1.0$. Across all methods, less compression (larger $k$) improves convergence.}
\label{fig:compression}
    \vspace*{-.25cm}
    \end{figure}

Specifically, we choose the compression matrix $\A$ to be (a) a random Gaussian matrix, (b) a random matrix with orthogonal rows, or (c) via the randomized SVD procedure (\ref{thm:tropp}). We employ the compressed MU algorithm as in Corollaries~\ref{cor:solving-two-sided}, \ref{cor:solving-one-sided}, and~\ref{cor:solving apx orthogonal} respectively.
For the two-sided method (d), we choose both compression matrices to be random Gaussian matrices and $\lambda = 0$. For the one-sided compression, we take $\lambda = 0.1$. Here, we report cosine similarity loss. Synthetic data has exactly low nonnegative rank $k \le 20$. For reference, MU on the uncompressed data achieve limiting similarity very close to $1$.

Our the theoretical results suggest the possibility of exact recovery from the compressed data in two cases: (1) if two-sided compression was employed (Theorem~\ref{thm:exact-two-sided}), or (2) if the compression matrix is such that $P_{\A}^\perp \X$ is zero, for example, if the compression matrix is learned via the randomized rangefinder algorithm (Corollary~\ref{cor:exact-one-sided}). Correspondingly, the one-sided data-adapted compression and two-sided random compression attain exact reconstruction at different rates depending on the amount of compression (Figure~\ref{fig:compression} (c,d)). Two-sided compression requires twice more memory for the same sketch size but works with generic (data-oblivious) measurements that can be advantageous for various reasons as they do not require access to data or another pass over the data. 
    
\revised{For the most compact one-sided data-oblivious measurements, Theorems~\ref{thm: apx orthogonal A} and \ref{thm:oneside_penalty} --- corresponding to the measurements of Figure~\ref{fig:compression} (a,b) respectively --- imply that the additional error might appear as a result of compression (from the term with $\|P^\perp_{\A^T} \X\|^2_F$). Here, we show that on an ``easy" synthetic problem oblivious one-sided measurements can be also used: the compressed MU algorithm results in a good, although not perfect, recovery (Figure~\ref{fig:compression} (a,b)). The amount of additional loss depends on the amount of compression: indeed, for larger $k$ the dimension of the row-space of $\A$ is larger and the operator $P^\perp_{\A^T}$ projects the data into the smaller space, leading to smaller additional losses in the bounds such as \eqref{eq:one-sided-loss} and \eqref{eq:approx-orth-target-error}. }

\revised{In summary, we observe that less compression leads to a faster or better convergence performance in all the scenarios of Figure~\ref{fig:compression}.}

\subsection{Effect of regularization parameter $\lambda$ on recovery} 

We have chosen the regularization parameter $\lambda=0.1$ for the one-sided experiments above.
Here, we demonstrate that it is important empirically, as well as theoretically, to add nonzero regularization term in the one-sided compression case. In Figure~\ref{fig:lambda}, we consider the compressed MU algorithm from one-sided data-adapted measurements for the 20News data with $k = 100$ and $r = 20$.
We can see that regularization can have a beneficial effect on performance and $\lambda = 0$ compromises the convergence. At the same time, too large $\lambda$ could slow down the convergence or result in slightly worse limiting loss. 
\begin{figure}
    \centering
    \includegraphics[width=.45\textwidth]{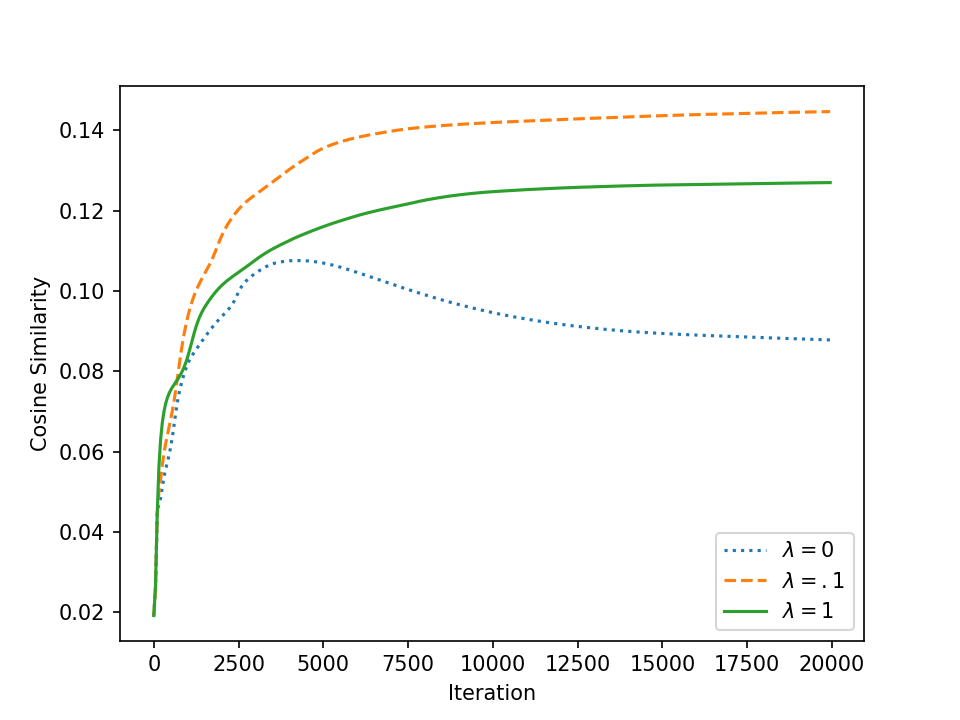}
    
    \caption{Effect of regularization parameter $\lambda$ on the MU algorithm \eqref{eq:one-sided-mu-updates}. 20News dataset compressed with data-adapted one-sided measurements, $\sigma$ is chosen minimal so that we have $\A\A^T \geq -\sigma$. The absence of regularization compromises convergence and too strong regularization results in a higher loss.}
    \label{fig:lambda}
        \vspace*{-.25cm}
\end{figure}

\subsection{Recovery methods performance comparison on real-world data} \label{sec:exp-comp}
We compare our proposed methods with a ``NMF with random projections" method proposed in \cite{wang2010efficient} and in \cite{tepper2016compressed}.
These works adapted the updates of \cite{ding2008convex} to the compressed NMF setting resulting in the updates:
\begin{equation}\label{eq: wangli}
\begin{aligned}
\U &\leftarrow \U \circ \sqrt{\frac{(\Y_2 \A_2^T \V)_+ + (\U\V^T \A_2 \A_2^T \V)_-}{(\Y_2 \A_2^T \V)_- + (\V^T \A_2 \A_2^T \V)_+}}
\end{aligned}
\end{equation}
and similar for $\V$, see equations (3.28) and (3.29) of \cite{wang2010efficient}.
The work of \cite{wang2010efficient} propose to use the updates \eqref{eq: wangli} where $\A_1$ and $\A_2$ are chosen to be Gaussian matrices.
In this case we denote these updates (WL) in the legends.
The work of \cite{tepper2016compressed} also proposed using the randomized rangefinder procedure \cite{halko2011finding} as in our Corollary~\ref{cor:exact-one-sided} to choose the matrices $\A_1$ and $\A_2$.
In this case we denote this method (TS) in the legends.

For our two-sided (MU) method we solve a simplified version of the two-sided objective \eqref{eq:two-sided-2} with $\lambda = 0$. We take $\A_1$ and $\A_2$ to be random Gaussian matrices in the oblivious case or according to the randomized rangefinder procedure as in Corollary~\ref{cor:exact-one-sided} in the adaptive case. For the data-adapted one-sided (MU) method, we take $\lambda = 0.1$ and solve the problem \eqref{eq:comp-shift-nmf}. Then, we include the recovery via projected gradient descent (GD), as described in Section~\ref{sec:gd} with a step size of $\alpha = .001$. 

\begin{figure}[t!]
       \subfloat[Faces; random oblivious compression]{
            \includegraphics[width=.48\linewidth]{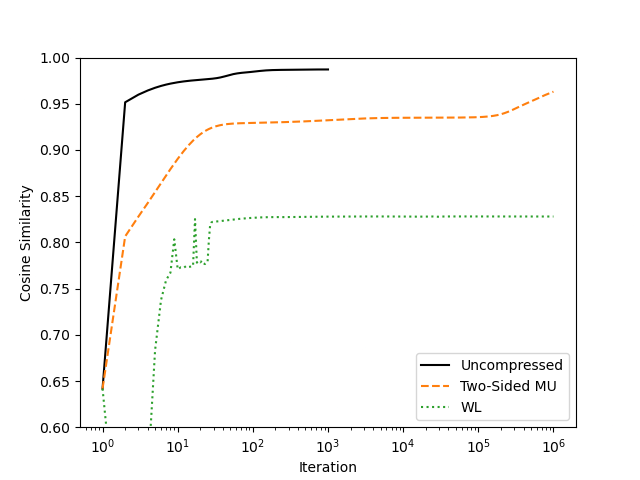}
        }\hfill
        \subfloat[Faces; data-adapted compression]{
            \includegraphics[width=.48\linewidth]{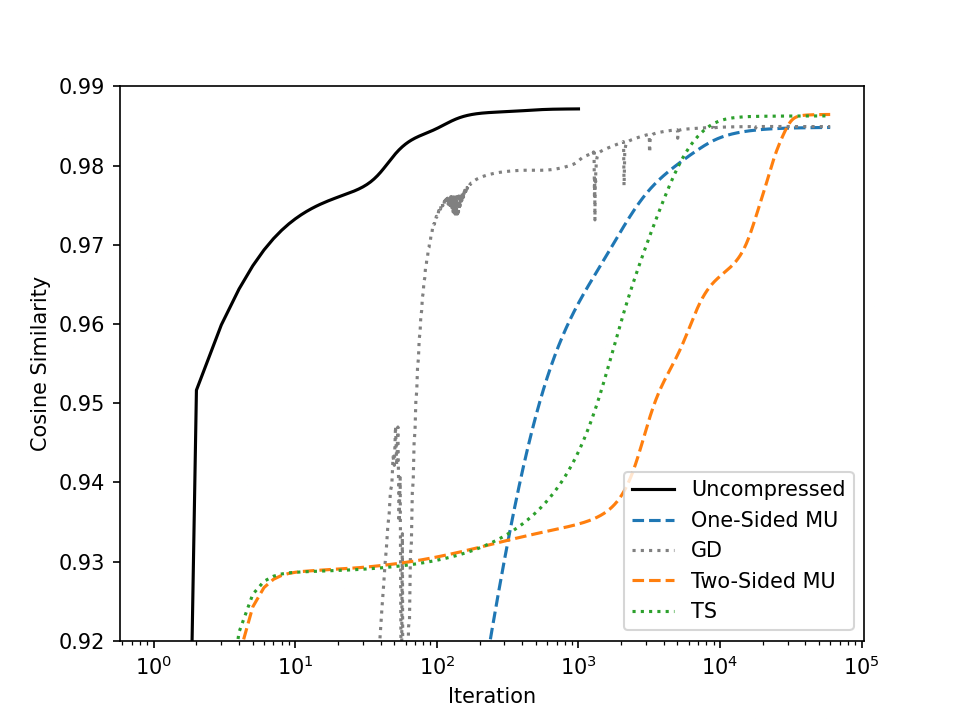}
        }\\
        \subfloat[20News; random oblivious compression]{
            \includegraphics[width=.48\linewidth]{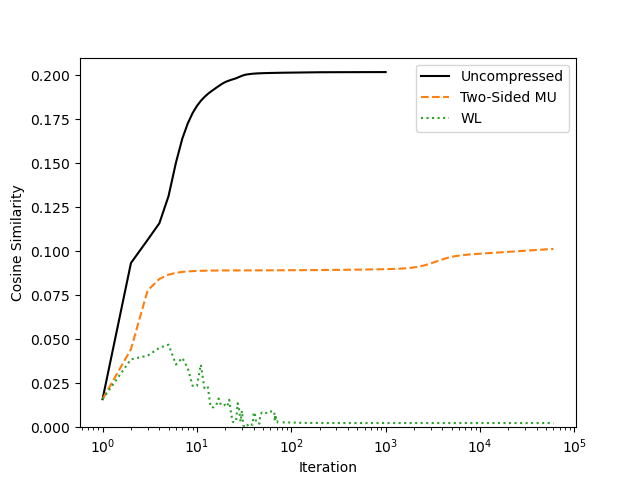}
        }\hfill
        \subfloat[20News; data-adapted compression]{
            \includegraphics[width=.48\linewidth]{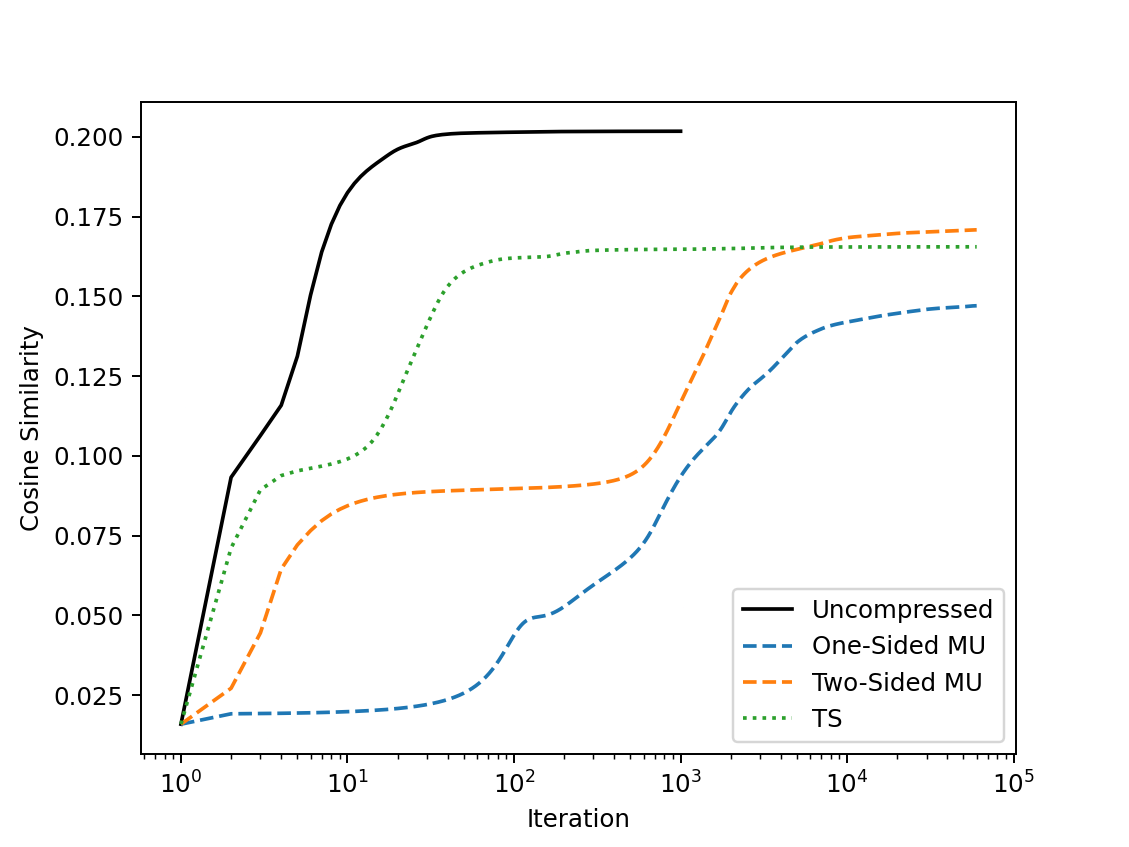}
        }
        \caption{(a,c): recovery from random Gaussian measurements, averaged over $5$ runs; our two-sided MU methods lead to better convergence than WL \cite{wang2010efficient}.
        (b,d): data-adapted methods with sketching matrices obtained with the randomized rangefinder algorithm (like in Corollary~\ref{cor:exact-one-sided});  
        our two-sided \revtwo{MU} perform slightly better than TS \cite{tepper2016compressed} after enough iterations. 
        }
        \label{fig:real-world-convergence}
       \vspace*{-.25cm}
    \end{figure}

\medskip

In Figure~\ref{fig:real-world-convergence} (a,b), we study the performance of compressed nonnegative matrix factorization for the Faces dataset with target rank $r = 6$ and sketch size $k = 20$. 
For the memory, the data matrix $\X$ contains $1638400$ elements whereas the total number of elements in $\A$ and $\X\A$ is $89920$, representing a memory ratio of approximately $5\%$ in the one-sided case and $10\%$ in the two-sided compression.  \emph{On the Faces dataset, our methods, especially data-adapted versions, attain almost the quality of MU on the uncompressed data while using only $5\%$ of the memory ($10\%$ in the two-sided case).}

In Figure~\ref{fig:real-world-convergence} (c,d), we study the performance of compressed nonnegative matrix factorization for the 20News dataset with target rank $r = 20$ and sketch size $k = 100$. 20News is a ``hard dataset" for NMF -- we can see that even in a full uncompressed dataset NMF achieves only $0.2$ cosine similarity (however, this similarity can be enough to do a meaningful job for topic modeling applications) and our compressed MU from data-adapted measurements achieve higher than $0.17$ cosine similarity while using only $2\%$ of memory required for the uncompressed MU with the one-sided compression. Indeed, the number of elements in $\X$ (including zeros) is $1146357108$.
The total number of elements in $\Y_1$, $\Y_2$, $\mathbf{1}^T\X$, $\X\mathbf{1}$, $\A_1$, $\A_2$, $\U$, and $\V$ is $25005192$.
\emph{On the 20News dataset, our compressed MU from data-adapted measurements attain $85\%$ of the similarity using only $2\%$ of the memory ($4\%$ in the two-sided case) compared to the uncompressed NMF problem.}

\medskip

Since it might be infeasible to even run an uncompressed problem from a memory perspective, we do not specifically focus on time performance here. However, we note that while it typically requires less iterations for the uncompressed NMF to learn the factorization, the iterations themselves are considerably slower. In Figure~\ref{fig:real-world-convergence} (c,d) it would take several hours to run the uncompressed experiment until $60,000$ iterations, while the other methods take at most several minutes, so we show only the first $10^3$ iterations for uncompressed NMF. \emph{For the Faces dataset, it took $16$ sec to run compressed MU and $12$ sec to run GD until $60,000$ iterations, and we can see that $10$ times less iterations would have been enough for approximately same quality. The uncompressed method took $7$ sec for the plotted $10^3$ iterations, so at least $7$ min would be required to run it for $60,000$ iterations. }

\revised{See Appendix A for additional information about the expreriments from 
 this section, including the time and resulting cosine similarity performance of each method.}

\subsection{Recovery methods produce interpretable decompositions on real-world data}
An important property of nonnegative low-rank factorizations is getting interpretable components. In Figure~\ref{fig:faces}, we briefly demonstrate that the components learned from the compressed data are also interpretable. That is, we show the columns of the fitted $\V$ matrix reshaped back to $64 \times 64$ images in the same setup as in Figure~\ref{fig:real-world-convergence} (b) for the one-sided data-adapted measurements.
\begin{figure}[t!]
        \subfloat[Compressed: MU \eqref{eq:one-sided-mu-updates}]{%
            \includegraphics[width=.32\linewidth]{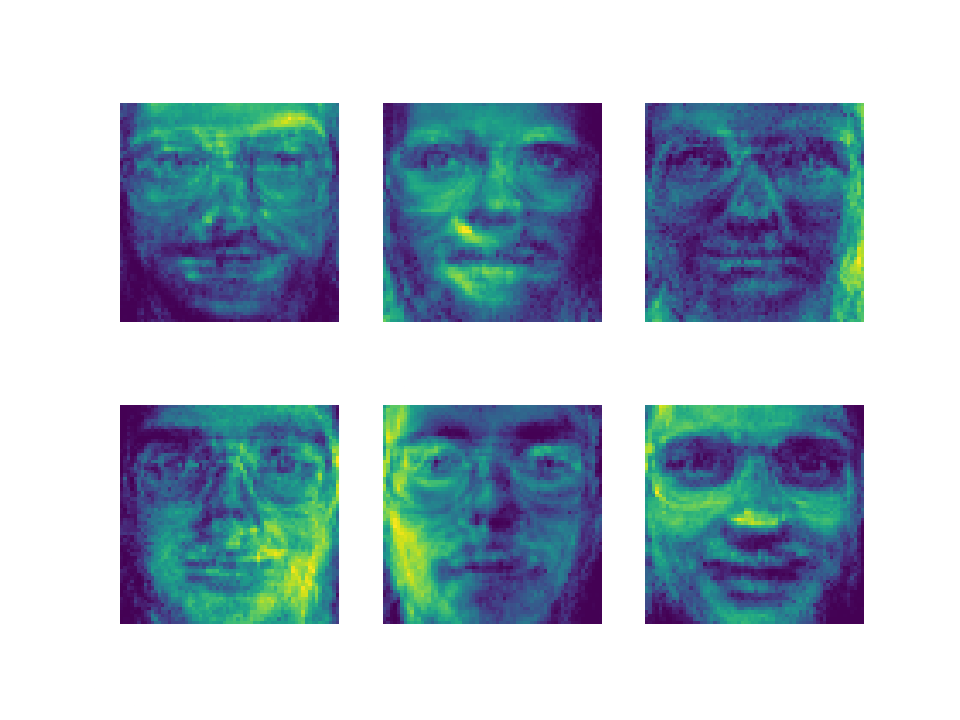}%
        }\hfill
        \subfloat[Compressed: GD \eqref{eq:proj-gd-updates}]{%
            \includegraphics[width=.32\linewidth]{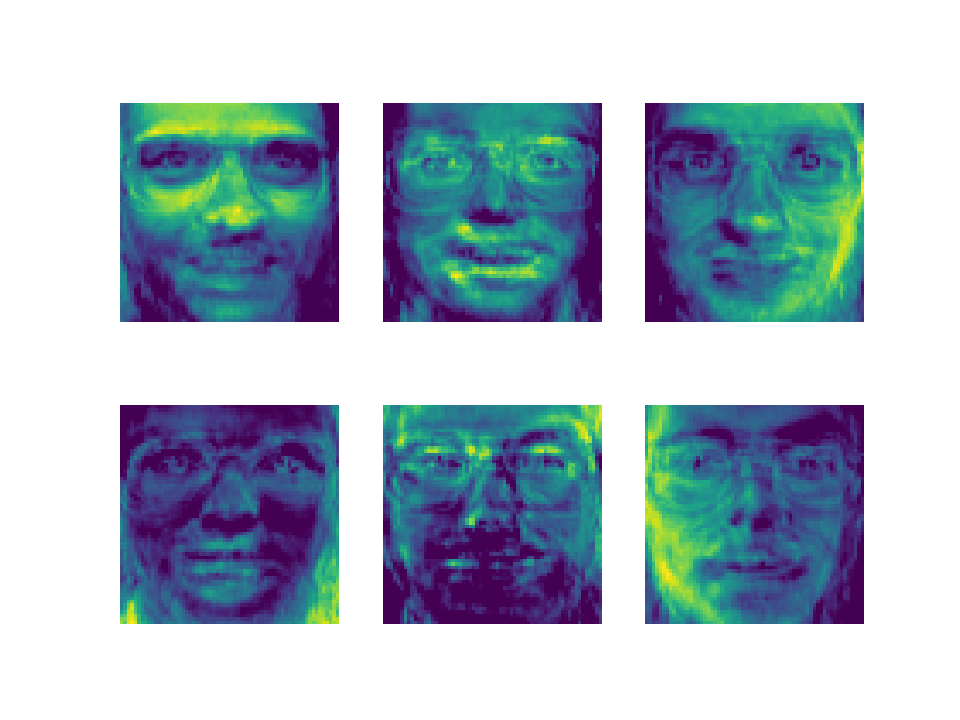}%
        }
        \subfloat[Full data: MU]{%
            \includegraphics[width=.32\linewidth]{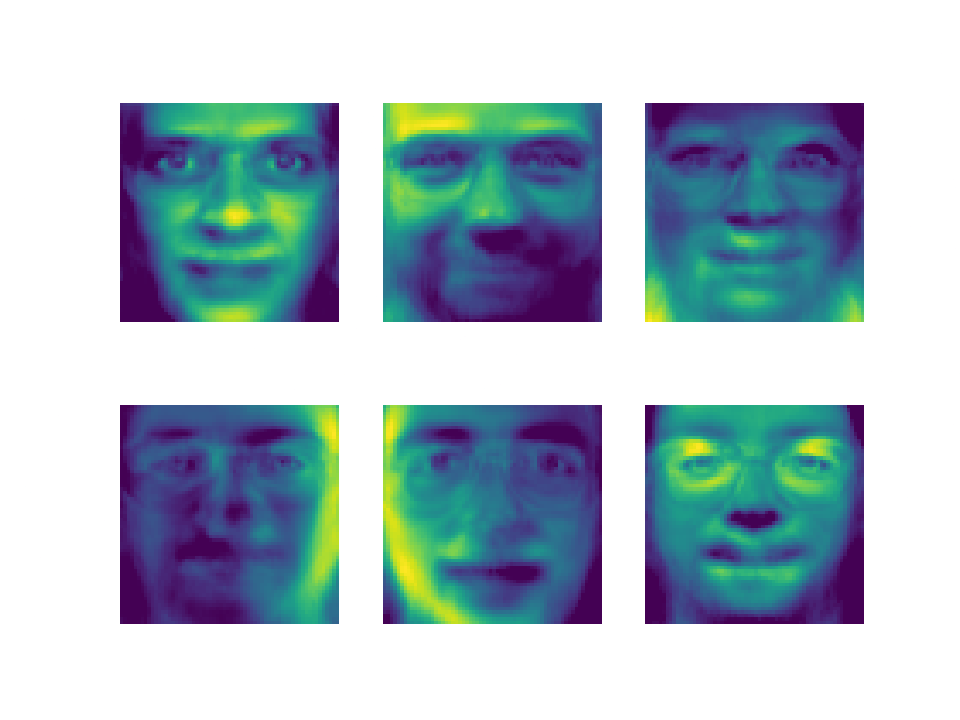}%
        }
        \caption{Six ``representative" faces from the Faces dataset learned from the compressed dataset of the size $\sim 5\%$ of initial data. Data-adapted compression matrix $\A$ is used.}
        \label{fig:faces}
         \vspace*{-.25cm}

    \end{figure}

\section{Conclusion and future directions}\label{sec:future}
In this paper, we propose several formulations of the NMF problem that (a) work using only compressed initial data, where the compression is done with linear maps (sketches) that access initial data only once or twice; (b) have optimal solutions that are provably compatible with the optimal solutions of the uncompressed NMF problem; (c) are supplemented with memory-efficient algorithms that solve the compressed problems without returning to the initial large data or forming any matrices of the original size. The convergence of these algorithms is proved in a standard for the NMF-related algorithms form, that is, showing that the objective function (that we provably connect to the original uncompressed objective) does not increase under the updates. We supplement the theoretical results with the experiments showing comparable nonnegative factorization performance using only $\sim 5\%$ of initial data, on artificial, text and image datasets.

\vspace*{0.2cm}
There are multiple venues of future work stemming from our approach. For the two-sided measurements, we currently do not have a theorem that holds for the data matrices with approximately low nonnegative rank, like in the one-sided case. The experimental evidence clearly suggests similar more general results should hold in the two-sided case. Also, it would be interesting to explain theoretically why the two-sided compression is less sensitive to the regularization (in practice, we take $\lambda = 0$ in the two-sided experiments, which significantly simplifies the updates from Corollary~\ref{cor:solving-two-sided}).  

An important challenge is to meaningfully transfer the obtained theoretical connection between the optimal values of the standard and compressed NMF to the decompositions that we learn in practice. The obstacle is in poor guarantees for the convergence of the NMF learning methods. Recently, in \cite{lyu2020block}, it was shown that a regularized and component-wise bounded version of MU indeed converges to a first-order optimal point with a controlled number of iterations. It would be reasonable to consider an analogue of this version of MU for the compressed objective and tighten the algorithmic connection between the full and compressed problems through it.

Then, it is important to study the scalable versions of other nonnegative factorization algorithms besides multiplicative updates and projected gradient descent. We focused on multiplicative updates because of their relative popularity and simplicity, but perhaps other methods may be better adapted to sketched problems. A related question is to get theoretical guarantees for the methods proposed in \cite{tepper2016compressed} that empirically show comparable performance and typically faster convergence than our proposed algorithms. 

Further, it is natural and meaningful to extend the framework to the compressed versions of high-order (tensor) problems. It has been recently shown \cite{kassab2020sparseness, vendrow2021neural} that nonnegative tensor factorization result in more interpretable decomposition for naturally high-order data, such as temporal or multi-agent, than their matrix NMF counterparts. At the same time, the tensor methods are even more computationally demanding and would benefit from scalable approximations. Scalable versions of other NMF-based algorithms such as semi-supervised versions \cite{semisupervised} or sparse NMF \cite{kassab2020sparseness, hoyer2004non} are also of interest. \revtwo{Finally, while in this work we focus solely on linear sketching techniques, it is interesting to investigate both theoretical and empirical properties of nonlinear dimension reduction for the NMF and nonnegative tensor factorization problems.}

\bibliographystyle{plain}
\bibliography{references}

\appendix

\section{Additional numerical evidence: time comparison}
Here, we include additional summarizing information for the experiments from Section~\ref{sec:exp-comp}.
\begin{table}[h]
\centering
\begin{tabular}{p{2.2cm}|p{1.7cm}|p{1.3cm}|p{1.6cm}|p{1.9cm}|p{1.5cm}}
Method &  Oblivious\revtwo{/} \newline Adapted & Memory needed &Time per \newline iteration \newline \revtwo{(ms)} & Similarity \newline after $10^3$ \newline iterations & Final \newline similarity  \\
\hline
\hline
Uncompressed  & & 100\% & 6.79  & \textbf{.9872} & N/A \\\hline
WL & O & 10\% &  0.26  & .8279 & .8280 \\\hline
TS & A & 10\% & 0.27  & .9439 & \textbf{.9863}  \\\hline
MU 2-sided & O & 10\% & 0.23  & .9322 & .9631  \\\hline
MU 1-sided & A & 5\% & 0.27  & .9626 & \textbf{.9848} \\\hline
MU 2-sided & A & 10\% & 0.23  & .9347 & \textbf{.9865} \\\hline
GD & A  & 5\%& 0.2  & \textbf{.9806} & \textbf{.9849} \\\hline
\label{table2}
\end{tabular}
\caption{Summary of results for the Faces dataset. Oblivious measurements (O) advantageous as they can be applied to any data, data-adapted (A) typically result in better performance. All oblivious methods were run until $10^6$ iterations total and all data-\revtwo{adapted} methods until $6*10^4$ iterations total; final similarity is reported at the last iteration. Time is reported in microseconds. Similarity above $98\%$ is highlighted. }
\label{table:faces}
\end{table}

\begin{table}[h]
\centering
\begin{tabular}{p{2.2cm}|p{1.7cm}|p{1.3cm}|p{1.6cm}|p{1.9cm}|p{1.5cm}}
Method &  Oblivious\revtwo{/} \newline Adapted & Memory needed &Time per \newline iteration \newline \revtwo{(ms)} & Similarity \newline after $10^3$ \newline iterations & Final \newline similarity  \\
\hline
\hline
Uncompressed  & & 100\% & 50.6  & \textbf{.2018} & N/A \\\hline
WL & O & 4\% &  0.73  & .0024 & .0024 \\\hline
TS & A & 4\% & 0.73  & .1648 & .1655  \\\hline
MU 2-sided & O & 4\% & 1.5  & .0898  & .1013  \\\hline
MU 1-sided & A & 2\% & 1.84  & .0935  & .1471 \\\hline
MU 2-sided & A & 4\% & 1.5  & .1167 & \textbf{.1709} \\\hline
\label{table3}
\end{tabular}
\caption{Summary of results for the 20News dataset. All compressive methods were run until $6*10^4$ iterations total; final similarity is reported at the last iteration. Time is reported in milliseconds. Similarity above $17\%$ is highlighted (it is a hard dataset, but this similarity can be still enough for topic modeling tasks). }
\label{table:20news}
\end{table}

\end{document}